\documentclass[twoside]{aiml}


\usepackage{aimlmacro}

\usepackage{amsfonts,amscd,amsmath,amssymb,amsfonts,latexsym,}
\usepackage[usenames,dvipsnames]{xcolor}
\usepackage{enumerate}
\usepackage{manfnt}
\usepackage[shortlabels]{enumitem}
\usepackage{faktor}
\usepackage{hyperref}
\hypersetup{colorlinks=true,linkcolor=blue,urlcolor=blue}

\newcommand{\holds}[1]{\:\left(#1\right)}
\newcommand{\Pow}[1]{\mathcal{P}\left(#1\right)}
\newcommand{\inv}{{}^{-1}}
\newcommand{\U}{\mathcal{U}}
\newcommand{\V}{\mathcal{V}}

\newcommand{\ML}{\mathrm{ML}}
\newcommand{\PV}{\mathrm{PV}}
\renewcommand{\b}{\Box}
\renewcommand{\d}{\Diamond}

\newcommand{\MP}{\mathbf{MP}}
\newcommand{\Gen}{\mathbf{Gen}}
\newcommand{\Sub}{\mathbf{Sub}}

\newcommand{\K}{\mathbf{K}}

\newcommand{\KV}{\mathbf{K5}}
\newcommand{\DV}{\mathbf{KD5}}

\newcommand{\DIVV}{\mathbf{KD45}}
\newcommand{\KVB}{\mathbf{K5B}}

\newcommand{\SV}{\mathbf{S5}}

\newcommand{\GLIII}{\mathbf{GL.3}}
\newcommand{\GrzIII}{\mathbf{Grz.3}}
\newcommand{\C}{\mathcal{C}}

\newcommand{\F}{\mathcal{F}}
\newcommand{\G}{\mathcal{G}}
\newcommand{\Q}{\mathcal{Q}}
\DeclareMathOperator{\dom}{dom}
\DeclareMathOperator{\Fr}{Fr}
\DeclareMathOperator{\Log}{Log}
\DeclareMathOperator{\Con}{Con}
\newcommand\gen{{\uparrow}}
\newcommand\pto{\twoheadrightarrow}
\newcommand\restr{{\upharpoonright}}
\newcommand{\out}{_{\mathrm{out}}}
\newcommand{\IN}{_{\mathrm{in}}}

\DeclareMathOperator{\pr}{P}
\renewcommand{\P}{\pr}
\DeclareMathOperator{\E}{E}
\DeclareMathOperator{\var}{Var}

\newcommand{\RF}{\hat{F}}
\newcommand{\RU}{\hat{U}}
\newcommand{\RR}{\hat{R}}
\newcommand{\Rh}{\hat{h}}
\newcommand{\RX}{\hat{X}}
\newcommand{\as}{^{\mathbf{as}}}
\newcommand{\aas}{a.a.s. }

\begin{document}

\def\lastname{Sliusarev}

\begin{frontmatter}

\title{Modal logics of almost sure validities in some classes of euclidean and transitive frames}

\begin{abstract} 
Given a class \(\C\) of finite Kripke frames, we consider the uniform distribution on the frames from \(\C\) with \(n\) states. A formula is almost surely valid in \(\C\) if the probability that it is valid in a random \(\C\)-frame with \(n\) states tends to \(1\) as \(n\) tends to infinity. The formulas that are almost surely valid in \(C\) form a normal modal logic.

We find complete and sound axiomatizations for the logics of almost sure validities in the classes of finite frames defined by the logics \( \DV\), \(\DIVV\), \( \KVB\), \( \SV\), \( \GrzIII\), and \( \GLIII.\)\bigskip

\emph{MSC class:} 03B45.
\end{abstract}

\begin{keyword}
modal logic,
Kripke semantics,
asymptotic probability,
random graphs,
euclidean relations,
Grzegorczyk's logic
\end{keyword}

\author{Vladislav Sliusarev}
\address{New Mexico State University, USA\\\and\\ Moscow Institute of Physics and Technology, Russia}
\end{frontmatter}

The study of random structures for various logical systems is a significant research field within contemporary mathematics. A large body of work in this area concerns the first-order relational language. A key result in this field is the Zero-one law for first-order logic, which states that any first-order definable property of random relational structures, such as random graphs, has an asymptotic probability of either zero or one. This law was independently proved for the Erd\H{o}s--R\'enyi model of random graph in~\cite{Glebskii1969RangeAD} and~\cite{fagin_1976}.

Exploring the behavior of random structures for logical systems that extend beyond the first-order language is a compelling research direction. The modal language, interpreted on relational structures using Kripke semantics, is a particularly important example, which has led to several noteworthy discoveries. Le Bars disproved the zero-one law for modal logic in Kripke frames~\cite{lebars}. Verbrugge~\cite{verbrugge_grz} proved the zero-one law in the finite models of of Grzegorczyk's logic and weak Grzegorczyk's logic, and later provided a valuable example of a modal logic that satisfies the zero-one law both in models and in frames, namely the provability logic \(\textbf{GL}\) \cite{verbrugge}.

A related problem of interest involves the study of sentences that are true with a probability tending to one, termed as \emph{asymptotically almost surely true}. Gaifman~\cite{Gaifman1964ConcerningMI} provided an axiomatization for almost sure truths in the Rad\'o graph, a model of a countable random graph, in the first-order relational logic.

There are several results in the study of almost sure truths in the modal languages. The logic of almost sure truths in random Kripke models coincides with Carnap’s modal logic, according to Halpern and Kapron~\cite{halpern}. The logic of almost sure truths in transitive reflexive models is also described in~\cite{halpern}. Verbrugge~\cite{verbrugge_grz}\cite{verbrugge} provided axiomatizations for almost sure truths in the models of \(\textbf{GL},\,\textbf{Grz},\) and~\(\textbf{wGrz}\), and almost sure validities in the frames of~\(\textbf{GL}\). Goranko~\cite{goranko2} found a complete and sound axiomatization for the logic of almost sure validities in a countable frame. The paper~\cite{goranko} utilizes this result to identify some of the almost sure validities in finite frames. However, the problem of complete axiomatization of almost sure validities in the class of finite frames remains open.

In this paper we discuss the logics of almost sure validities in various classes of finite frames. We generalize Goranko's construction of the random finite frame: given a class of frames~\(\C,\) we consider the uniform distribution on the labelled frames with \(n\) states that belong to~\(\C.\) Our general result states that the almost sure validities form a normal modal logic that extends~\(\Log\C.\) We achieve an important technical result of studying such logics: for a large class of logics, the almost sure validities in a random~\(\C\)-frame are also almost surely valid in a random \emph{connected}~\(\C\)-frame (Theorem~\ref{thm:logic-of-connected}). Since the connected frames typically have a simpler combinatorial structure, this connection allows us to find upper bounds on the logic of almost sure validities in~\(\C.\) We use this general theory to find finite axiomatizations for the logics of almost sure validities in the classes of frames defined by the logics \( \DV\), \(\DIVV\), \( \KVB\), \( \SV\), \( \GrzIII,\) and \( \GLIII.\)

\section{Preliminaries}\label{sec:preliminaries}

\subsection{Modal syntax and semantics}

We consider the basic modal language \(\ML\) of formulas in the alphabet that consists of a countable set \(\PV = \{p_0,\,p_1,\,\ldots\}\) of propositional variables, classical connectives \(\to,\,\bot,\) and a unary operator \(\b.\) We use the standard abbreviations of connectives, in particular, \(\d \varphi \equiv \lnot \b \lnot \varphi.\) 

A set \(L \subseteq \ML\) is a \emph{(normal modal) logic} if \(L\) contains all propositional tautologies, the normality axiom \(\b(p\to q) \to (\b p \to \b q)\) and is closed under the rules of inference:
\begin{description}
  \item[(MP)] If \(\varphi,\,\varphi \to \psi \in L\), then \(\psi \in L,\)
  \item[(Gen)] If \(\phi \in L\), then \(\b \varphi \in L,\)
  \item[(Sub)] If \(\varphi \in L\), \(p\in \PV,\,\theta\in \ML\), and the formula \(\psi\in \ML\) is obtained from \(\varphi\) by replacing all instances of \(p\) with \(\theta,\) then \(\psi \in L.\)
\end{description}

By a \emph{frame} we mean a Kripke frame \(F = (X,\,R),\,X \ne \varnothing,\,R \subseteq X\times X;\) we refer to the elements of \(X\) as \emph{states of \(F\)}. The set of states, or \emph{domain}, of \(F\) is denoted \(\dom F.\)

The notation \(F\models \varphi\), where \(F\) is a frame and \(\varphi\in\ML\) is a formula, means `\(\varphi\) is valid in \(F\)' with the standard definition (see, for example, \cite[Definition 1.28]{BdRV}). For any set \(\Gamma\) of modal formulas, \(F\models \Gamma\) means \(\forall \varphi\in \Gamma\holds{F\models \varphi}.\) The set \(\Log F\) of all formulas \(\varphi\in \ML\) that are valid in a frame \(F\) is called the \emph{logic} of \(F.\) This definition extends to classes of frames: if~\(\F\) is a class of frames, then~\(\Log \F\) is the set of all formulas that are valid in any frame~\(F\in\F\). Given a set  of formulas \(\Gamma\subseteq \ML\), let \(\Fr\Gamma\) denote the class of all frames \(F\) such that \(F \models \Gamma.\)

\subsection{Operations on frames}

For the convenience of the reader, we recall some basic notation and techniques of modal logic that we use in the article.

Let \(X\) be a set. The \emph{diagonal relation on \(X\)} is \(Id_X = \{(a,\,a)\mid a\in X\}.\) If \(R \subseteq X\times X\) is a relation, let \(R^0 = Id_X\) and \(R^{i+1} = R\circ R^i\) for any \(i \in\omega.\) The \emph{inverse relation} \(R\inv\) is defined as \(\{(a,b)\in X\mid bRa\},\) and~\(R^{-i} \equiv {(R\inv)}^i\) for any~\(n\in\omega.\)

For any \(U \subseteq X,\) \(R\out[U]\) denotes the set \(\{a \in X \mid \exists u\in U\holds{uRa}\},\) and~\(R\IN[U]\) denotes~\(\{a\in X\mid \exists u\in U\holds{aRu}\}.\) The notations~\(R\out(a),\,R\IN(a)\), where~\(a\in X,\) abbreviate~\(R\out[\{a\}]\) and~\(R\IN[\{a\}],\) respectively.

For a relation \(R \subseteq X\times X\), define the \emph{transitive closure} \(R^+=\bigcup_{i \ge 1} R^i,\) and the \emph{reflexive transitive closure}~\(R^*=Id_X \cup R^+.\)

Given \(R \subseteq X\times X\) and \(U \subseteq X,\)  the \emph{restriction of \(R\) on \(U\)} is the relation \(R\restr U = R\cap (U\times U).\)

If \(F = (X,\,R)\) is a frame and \(U \subseteq X\), the \emph{subframe of \(X\) generated by \(U\)} is the frame \(F\gen U = (R^*\out[U],\,R\restr R^*\out[U]).\) If \(a\in X\), then \(F\gen a\) is a shorthand for \(F\gen \{a\}.\) A frame~\(F\) is said to be \emph{point-generated} if~\(F = F\gen a\) for some~\(a\in \dom F.\) The generated subframe preserves the validity of modal formulas: for any~\(U \subseteq \dom F\), \(\Log{F} \subseteq \Log{F\gen U}\)~\cite[Proposition~2.6]{BdRV}.

The \emph{disjoint sum} \(\biguplus_{i\in I} F_i \) of the family of frames \(F_i = (X_i,\,R_i),\,i \in I,\) where \(I\) is a nonempty set,  is defined as \((X,\,R)\) where 
\[
  X = \{(a,i)\mid i\in I,\,a\in X_i\};\qquad
  (a,i) R (b,j)\iff i = j\text{ and }a R_i b.
\]
 The notation \(F \uplus G\) is a shorthand for \(\biguplus_{i \in \{1,\,2\}} F_i\) where \(F_1 = F,\,F_2 = G.\) It is well-known that \(\Log \biguplus_{i\in I} F_i = \bigcap_{i\in I} \Log F_i\) \cite[Proposition~2.3]{BdRV}.

 Given a pair of frames~\(F=(X,\,R)\) and~\(G = (Y,\,S),\) a \emph{p-morphism from~\(F\) to~\(G\)} is a surjective map~\(f:\:X\to Y\) such that \(S\out(f(a)) = f(R\out(a))\) for any~\(a\in X.\)
 We write~\(F\pto G\) if there exists a p-morphism from~\(F\) to~\(G\). The p-morphism preserves the validity of modal formulas: if~\(F \pto G\), then~\(\Log F \subseteq \Log G\) \cite[Proposition~2.14]{BdRV}

 A \emph{frame isomorphism} between \(F=(X,\,R)\) and~\(G = (Y,\,S),\) is a bijection~\(f:\:X \to Y\) such that~\(aRb\) iff~\(f(a)Rf(b)\) for all~\(a,\,b\in X.\) It is straightforward that the existence of an isomorphism between~\(F\) and~\(G\) implies that~\(\Log F = \Log G.\)

\subsection{Classes of frames and their logics}\label{sec:conditions}

A relation \(R \subseteq X \times X\) is:
\begin{enumerate}[(i)]
   \item \emph{serial} if \(\forall a\in X\holds{R\out(a)\ne \varnothing};\)
   \item \emph{reflexive} if \(\forall a\in X\holds{a R a};\)
   \item \emph{irreflexive} if \(\lnot\exists a\in X\holds{a R a};\)
   \item \emph{symmetric} if \(\forall a,\,b\in X\holds{a R b \implies b R a};\)
   \item \emph{transitive} if \(\forall a,\,b,\,c \in X\holds{aRb,\,bRc \implies aRc};\)
   \item \emph{Euclidean} if \(\forall a,\,b,\,c \in X\holds{aRb,\,aRc \implies bRc};\)
   \item \emph{non-branching} if
   \(\forall a,\,b,\,c\in X\holds{aRb,\,aRc \implies bRc\text{ or }cRb\text{ or }c=b};\)
   \item \emph{Noetherian} if there are no infinite chains~\(a_0 R a_1 R \ldots\) with~\(a_i \ne a_{i+1},\,i\in \omega.\)
 \end{enumerate} 

A frame \(F = (X,\,R)\) is called serial (reflexive, etc.) if the relation of \(F\) is serial (reflexive, etc.)
  
In this paper we will consider the logics~\(\DV,\, \DIVV,\, \KVB,\, \SV,\) \(\GLIII,\,\GrzIII\). Recall that the frame classes of these logics are:
\begin{enumerate}
  \item \(\Fr\DV = \{\text{serial Euclidean frames}\};\)
  \item \(\Fr\DIVV = \{\text{serial transitive Euclidean frames}\};\)
  \item \(\Fr\KVB = \{\text{symmetric Euclidean frames}\};\)
  \item \(\Fr\SV = \{\text{reflexive Euclidean frames})\}\)
  \item \(\Fr\GLIII = \{\text{transitive irreflexive non-branching Noetherian frames}\}\)
  \item \(\Fr\GrzIII = \{\text{transitive reflexive non-branching Noetherian frames}\}\)
\end{enumerate}

These logics have the finite model property, so each of them is the logic of all finite point-generated frames that satisfy the corresponding frame condition \cite{BdRV}. For instance, \(\DV\) is the logic of all finite point-generated serial Euclidean frames.
\subsection{Random frames}

For any \(1\le n \in\omega,\) let~\([n]\) denote the set \(\{0,\,\ldots,\,n-1\}\), and let  \(\F_n = \{([n],\,R)\mid R \subseteq [n]\times [n]\}\) be the set of all frames with the set of states \([n].\) 

Let \(\C\) be a nonempty class of frames. For any \(1 \le n \in\omega,\) let \(\RF_n(\C)\) be the uniformly distributed random element of the finite set \(\F_n\cap \C:\)

\begin{equation}
  \P(\RF_n(\C)\in A) = \frac{|\F_n\cap \C\cap A|}{|\F_n\cap \C|}\quad\text{for any set }A\subseteq \F_n\cap \C.\label{eq:main}
\end{equation}
  
Formally, we fix some measure space~\((\Omega,\,\G,\,\P),\) where~\(\G\) is a \(\sigma\)-algebra on~\(\Omega\) and~\(\P:\:\G \to [0,1]\) is a measure, and define~\(\RF_n(\C)\) to be a measurable map from \(\Omega\) to~\(\F_n\) that satisfies~\eqref{eq:main}, where \(\P(\RF_n(\C)\in A)\) is a notation for ~\(\P\{\omega\in \Omega\mid \RF_n(\C)(\omega)\in A \}.\) The values~\(\RF_n(\C)(\omega)\) for~\(\omega\in \Omega\) are called \emph{realizations} of~\(\RF_n(\C).\)

For any set of frames \(\Q\subseteq \F_n\), we say that \(\RF_n(\C)\) belongs to \(\Q\) \emph{asymptotically almost surely (a.a.s.)} if
\[
  \lim_{n\to \infty}\P(\RF_n(\C)\in Q) = 1.
\]
Sometimes we will refer to a set of frames~\(\Q\subseteq \F_n\) as a \emph{property of frames}. In this case `\(\Q\) holds in~\(\RF_n(\C)\) a.a.s.' means that~\(\RF_n(\C)\in \Q\) a.a.s.

Let \(\Log\as(\C)\) denote the set of formulas \(\varphi\in \ML\) such that \(\RF_n(\C)\models\varphi\) asymptotically almost surely.

If \(L\) is a normal modal logic, we write \(\RF_n(L)\) for \(\RF_n(\Fr L)\) and \(L\as\) for~\(\Log\as(\Fr L).\)

By \eqref{eq:main},
\begin{equation}\label{eq:as-validity}
  \varphi\in \Log\as(\C) \iff \lim_{n\to \infty}\frac{|\F_n\cap \C\cap \Fr\{\varphi\}|}{|\F_n\cap \C|} = 1.
\end{equation}

The present work studies the sets \(\Log\as(\C)\) for some modally definable classes of frames: \(\C = \Fr(L)\) for some modal logic \(L.\)

\begin{theorem}\label{thm:basic}
  For any class of frames \(\C,\) \(\Log\as(\C)\) is a normal modal logic and \(\Log\as(\C) \supseteq \Log(\C).\)
\end{theorem}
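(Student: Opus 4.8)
The plan is to split the verification into two layers: purely \emph{frame-theoretic} facts about how validity interacts with the inference rules, and purely \emph{probabilistic} facts about the a.a.s.\ operator. The bridge between them is the equivalence~\eqref{eq:as-validity}, which says that \(\varphi\in\Log\as(\C)\) holds exactly when the sets \(\Fr\{\varphi\}\cap \F_n\cap \C\) have probability tending to \(1\). I would isolate two elementary probabilistic principles and reuse them throughout. First, \emph{monotonicity}: whenever \(\Fr\{\varphi\}\subseteq\Fr\{\psi\}\) we have \(\P(\RF_n(\C)\models\varphi)\le\P(\RF_n(\C)\models\psi)\), so \(\varphi\in\Log\as(\C)\) forces \(\psi\in\Log\as(\C)\). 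Second, \emph{closure under finite intersection}: since \(\P(A\cap B)\ge\P(A)+\P(B)-1\) for any events \(A,B\), if \(\P(\RF_n(\C)\in A)\) and \(\P(\RF_n(\C)\in B)\) both tend to \(1\), then so does \(\P(\RF_n(\C)\in A\cap B)\).

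With these in hand, most clauses reduce to the corresponding frame-level observation. For the inclusion \(\Log(\C)\subseteq\Log\as(\C)\): if \(\varphi\in\Log(\C)\) then every frame of \(\F_n\cap\C\) validates \(\varphi\), so \(\F_n\cap\C\subseteq\Fr\{\varphi\}\) and the ratio in~\eqref{eq:as-validity} is identically \(1\). Since all propositional tautologies and the normality axiom are valid in every frame, they lie in \(\Log(\C)\) and hence this inclusion already disposes of those clauses. For closure under \(\Sub\) and \(\Gen\) I would invoke the standard facts that frame validity is preserved by substitution and by necessitation, namely \(\Fr\{\varphi\}\subseteq\Fr\{\psi\}\) whenever \(\psi\) is a substitution instance of \(\varphi\), and \(\Fr\{\varphi\}\subseteq\Fr\{\b\varphi\}\); monotonicity then finishes both.

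The one clause that genuinely needs the second principle is closure under \(\MP\). Here the frame-level fact is that modus ponens preserves validity, i.e.\ \(\Fr\{\varphi\}\cap\Fr\{\varphi\to\psi\}\subseteq\Fr\{\psi\}\). Assuming \(\varphi,\varphi\to\psi\in\Log\as(\C)\), the events \(\RF_n(\C)\models\varphi\) and \(\RF_n(\C)\models\varphi\to\psi\) each have probability tending to \(1\); by closure under intersection so does their conjunction, and by monotonicity (using the inclusion above) the probability of \(\RF_n(\C)\models\psi\) is at least as large, hence also tends to \(1\).

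I expect the genuinely new content to be minimal: the theorem is essentially the observation that the a.a.s.\ operator transports the set-theoretic shadows of the modal inference rules through the probability measure. The main point to handle carefully is \(\MP\), where the naive idea of intersecting two ``probability-one-in-the-limit'' events must be justified by the bound \(\P(A\cap B)\ge\P(A)+\P(B)-1\) rather than by monotonicity alone. A secondary technical nuisance is that \(\F_n\cap\C\) may be empty for some \(n\), which leaves~\eqref{eq:main} undefined; I would address this by restricting every limit to those \(n\) for which \(\RF_n(\C)\) is defined, since the a.a.s.\ notion is insensitive to any deleted values of \(n\).
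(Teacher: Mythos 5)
Your proposal is correct and follows essentially the same route as the paper: frame-level inclusions (\(\Fr\{\varphi\}\cap\Fr\{\varphi\to\psi\}\subseteq\Fr\{\psi\}\), preservation under substitution and necessitation, validity of \(\K\) in all frames) transported through the probability via monotonicity and the bound \(\P(A\cap B)\ge\P(A)+\P(B)-1\), which is exactly the inequality the paper uses for \(\MP\). As minor bonuses, you state the \(\Gen\) inclusion in the correct direction \(\Fr\{\varphi\}\subseteq\Fr\{\b\varphi\}\) (the paper's text has it reversed) and you flag the possible emptiness of \(\F_n\cap\C\), which the paper leaves implicit.
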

\begin{proof}
  Since the minimal normal modal logic \(\K\) is valid in all frames, \(\P(\RF_n(\C)\models \K) = 1\) for any \(n \in\omega,\) hence \(\K \subseteq \Log\as(\C).\) Then \(\Log\as\) contains all propositional tautogolies and the normality axiom.

  Let us show that \(\Log\as(\C)\) is closed under \(\MP.\) Let \(\varphi\in \Log\as(\C)\) and \(\varphi\to \psi\in \Log\as(\C).\) Since the logic of any frame is a normal modal logic, \(\Fr\{\varphi,\,\varphi \to \psi\}\subseteq \Fr\{\psi\}.\) Then by \eqref{sec:preliminaries}
  \begin{align*}
    \P(\RF_n(\C)\models \psi) &= \frac{|\F_n\cap \C\cap \Fr\{\psi\}|}{|\F_n\cap \C|}\ge \frac{|\F_n\cap \C\cap \Fr\{\varphi,\,\varphi \to \psi\}|}{|\F_n\cap \C|} \\&= \P(\RF_n(\C)\models\varphi\text{ and }\RF_n(\C)\models\varphi\to\psi\}))\\& \ge 1 - \P(\RF_n(\C)\not\models \varphi) - \P(\RF_n(\C)\not\models\varphi\to\psi).
  \end{align*}
  Take the limit of both sides as \(n \to \infty.\) By assumption, \(\P(\RF_n(\C)\not\models \varphi) \to 0\) and \(\P(\RF_n(\C)\not\models\varphi\to\psi) \to 0,\) so \(\lim_{n\to \infty}\P(\RF_n(\C)\models \psi) \ge 1.\) Then \(\psi\in \Log\as(\C)\) by the definition.

  Since \(\Fr(\b\varphi) \subseteq\Fr(\varphi)\) for any \(\varphi \in \ML,\) \(\Log\as(\C)\) is closed under~\(\Gen\) by~\eqref{eq:as-validity}. A similar argument applies for \(\Sub.\)

  Finally, if \(\varphi\in \Log(\C)\), then \(\varphi\) is valid in any possible value of \(\RF_n(\C).\) Then for any \(n\in\omega,\,\P(\RF_n(\C)\models \varphi) = 1\), so \({\varphi\in \Log\as(\C)}.\)
\end{proof}

It follows directly from the theorem that \(L \subseteq L\as \) for any logic \(L.\)

  

\begin{proposition}\label{prop:squeeze}
  If~\(L\) is a modal logic, then~\((L\as)\as = L\as.\)
\end{proposition}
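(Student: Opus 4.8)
The plan is to prove the two inclusions separately, with the nontrivial content concentrated in one ``squeeze'' estimate. The inclusion \(L\as \subseteq (L\as)\as\) is immediate from Theorem~\ref{thm:basic} applied to the class \(\C = \Fr(L\as)\): since \(L\as\) is a normal modal logic it is sound with respect to its own frame class, so \(L\as \subseteq \Log(\Fr(L\as))\), and Theorem~\ref{thm:basic} gives \(\Log(\Fr(L\as)) \subseteq \Log\as(\Fr(L\as)) = (L\as)\as\). Chaining these yields \(L\as \subseteq (L\as)\as\).

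For the reverse inclusion the idea is to show that the random frames \(\RF_n(L)\) and \(\RF_n(L\as)\) become asymptotically indistinguishable, which is where the name ``squeeze'' comes from. Because \(L \subseteq L\as\) forces \(\Fr(L\as) \subseteq \Fr(L)\), we have \(\F_n\cap\Fr(L\as) \subseteq \F_n\cap\Fr(L)\), so conditioning the uniform law on \(\F_n\cap\Fr(L)\) to the subset \(\F_n\cap\Fr(L\as)\) reproduces exactly the law of \(\RF_n(L\as)\). Quantitatively, writing \(a_n = |\F_n\cap\Fr(L)|\) and \(b_n = |\F_n\cap\Fr(L\as)|\), a comparison of numerators gives, for every \(\varphi\),
\[
  \bigl|\,\P(\RF_n(L)\models\varphi) - \tfrac{b_n}{a_n}\,\P(\RF_n(L\as)\models\varphi)\,\bigr| \;\le\; 1 - \tfrac{b_n}{a_n}.
\]
Hence, provided one establishes the key claim that \(b_n/a_n \to 1\), the quantities \(\P(\RF_n(L)\models\varphi)\) and \(\P(\RF_n(L\as)\models\varphi)\) differ by \(o(1)\); in particular one tends to \(1\) if and only if the other does, which gives \((L\as)\as = L\as\) (indeed both inclusions at once).

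Everything therefore reduces to the claim that \(\RF_n(L) \in \Fr(L\as)\) asymptotically almost surely, and this is the step I expect to carry all the difficulty. The obstacle is that \(\Fr(L\as)\) is in general cut out by infinitely many axioms, so one cannot simply take a union bound over all \(\psi\in L\as\) of the individually vanishing probabilities \(\P(\RF_n(L)\not\models\psi)\), since such a bound need not converge to \(0\). To exploit structure I would use the standard fact that modal truth at a state depends only on the generated subframe, together with the inclusion \(\Log F \subseteq \Log(F\gen a)\) recalled in the preliminaries: a frame \(F\) lies outside \(\Fr(L\as)\) if and only if some point-generated subframe \(F\gen a\) already refutes an almost sure validity, i.e. \(F\gen a \notin \Fr(L\as)\). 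By the union bound over the \(n\) states and the symmetry of the uniform law on labelled frames this gives
\[
  \P\bigl(\RF_n(L)\notin\Fr(L\as)\bigr) \;\le\; n\,\P\bigl(\RF_n(L)\gen 0 \notin \Fr(L\as)\bigr),
\]
so it suffices to show that a point-generated subframe rooted at a fixed state is a forbidden \(L\)-frame only with probability \(o(1/n)\). Making this precise is the genuinely hard part and is where the specific combinatorics of \(\Fr(L)\) must enter: one must control the distribution of point-generated subframes of \(\RF_n(L)\) and argue that the ``bad'' isomorphism types — the point-generated \(L\)-frames refuting some almost sure validity — are collectively rare enough that their total expected count over the \(n\) states vanishes. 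I expect this to require an explicit description of the typical point-generated subframe of \(\RF_n(L)\) rather than any soft limiting argument.
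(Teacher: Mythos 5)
Your proposal follows the same route as the paper's own proof: the inclusion \(L\as \subseteq (L\as)\as\) via Theorem~\ref{thm:basic}, and the reverse inclusion by comparing the uniform measures on \(\F_n\cap\Fr L\) and \(\F_n\cap\Fr(L\as)\), with everything reduced to the single claim
\[
  \lim_{n\to\infty}\frac{\left|\F_n\cap\Fr(L\as)\right|}{\left|\F_n\cap\Fr L\right|}=1,
\]
i.e.\ that \(\RF_n(L)\in\Fr(L\as)\) a.a.s. Your conditional derivation from this claim is correct, and it is essentially the paper's ``product of two limits'' computation in a slightly different packaging, so up to that point the two arguments coincide.

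The genuine gap is that you never prove this claim: your proposal ends with a reduction (generated subframes, a union bound over the \(n\) roots, a required bound of \(o(1/n)\) per root) and an explicit admission that the hard part is missing, so as a standalone proof it is incomplete. What you should know is that the paper does not prove this step either: it asserts \(\lim_n \P(\RF_n(L)\models L\as)=1\) ``by the definition''. That is not a consequence of the definition. \(L\as\) is an infinite set of formulas, each individually a.a.s.\ valid, and countably many a.a.s.\ events need not hold simultaneously a.a.s.: conjoining, for each of the finitely many frames in \(\F_n\cap\Fr L\) refuting \(L\as\), a witnessing formula, one gets \(\P(\RF_n(L)\not\models L\as)=\P(\RF_n(L)\not\models\Psi_n)\) for some \(\Psi_n\in L\as\) depending on \(n\), and nothing in the definition forces this diagonal sequence of probabilities to vanish. (In the first-order setting the analogous statement is outright false: every \(n\)-vertex graph refutes the extension axiom with \(n\) named vertices, which is itself almost surely true, so no finite graph satisfies the whole almost-sure theory.) So your instinct that this step carries all the difficulty, and your observation that the naive union bound over \(L\as\) fails, are exactly right --- you have isolated precisely the step that the paper's proof passes over without justification. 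Closing it seems to require either an argument special to modal logic or information about the particular logic \(L\); for the concrete logics treated in this paper the claim is trivial, but only because \(L\as=L\) is there proved independently.
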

\begin{proof}
  By Theorem~\ref{thm:basic}~\(L\as \subseteq (L\as)\as.\) For the other direction, consider any \(\varphi\in (L\as)\as\).
  By the definition, there exist the limits:
  \begin{gather*}
    \lim_{n\to \infty}\frac{\left|\F_n\cap \Fr L\as \cap \Fr \{\varphi\}\right|}{\left|\F_n\cap \Fr L\as\right|} = \P(\RF_n(L\as)\models \varphi) = 1,\\
    \lim_{n\to \infty}\frac{\left|\F_n\cap \Fr L\as\right|}{\left|\F_n\cap \Fr L\right|} = \P(\RF_n(L) \models L\as) = 1.
  \end{gather*}
  Since~\(L \subseteq L\as\), we have \(\Fr L\as \subseteq \Fr L\) , so for any~\(\varphi\in (L\as)\as\),
  \begin{align*}
    \P(\RF_n(L)\models \varphi) &= \lim_{n\to \infty} \frac{\left|\F_n\cap \Fr L \cap \Fr \{\varphi\}\right|}{\left|\F_n\cap \Fr L\right|} 
    \\&\ge \lim_{n\to \infty} \frac{\left|\F_n\cap \Fr L\as \cap \Fr \{\varphi\}\right|}{\left|\F_n\cap \Fr L\right|}
    \\&= \lim_{n\to \infty}\frac{\left|\F_n\cap \Fr L\as \cap \Fr \{\varphi\}\right|}{\left|\F_n\cap \Fr L\as\right|} \cdot \frac{\left|\F_n\cap \Fr L\as\right|}{\left|\F_n\cap \Fr L\right|}
    \\&= \lim_{n\to \infty}\frac{\left|\F_n\cap \Fr L\as \cap \Fr \{\varphi\}\right|}{\left|\F_n\cap \Fr L\as\right|}\lim_{n\to \infty}\frac{\left|\F_n\cap \Fr L\as\right|}{\left|\F_n\cap \Fr L\right|}
    = 1 \cdot 1
    = 1.
  \end{align*}
\end{proof}

\subsection{Asymptotics}

Let \(f,\,g:\:\omega\to \omega\) be some functions. Then we write
\begin{enumerate}
  \item \(f\sim g,\) if \(\lim_{n\to \infty}\frac{f(n)}{g(n)}=1;\)
  \item \(f=o(g),\) if there exists~\(\alpha:\:\omega\to \mathbb{R}\) such that \(f(n) = \alpha(n) g(n)\) for all~\(n\in \omega\) and~\(\lim_{n\to \infty}\alpha(n) = 0;\)
  \item \(f=O(g),\) if there exists a real number \(C > 0\) and \(n\in\omega\) such that \(f(n) \le C g(n);\)
  \item \(f=\Omega(g),\) if there exists a real number \(C > 0\) and \(n\in\omega\) such that \(f(n) \ge C g(n).\)
\end{enumerate}

\subsection{Set partitions and combinatorial numbers}

Let \(X\) be a set. A family of subsets \(\U \subseteq \Pow{X}\setminus\{\varnothing\}\) is a \emph{partition}  of \(X\) if the elements of \(\U\) are pairwise disjoint and \(X = \bigcup \U.\)

The \emph{Bell number~\(B_n\)} is defined as the number of distinct partitions of the set \([n] = \{0,\,\ldots,\,n-1\}\). Equivalently, \(B_n\) is the number of distinct equivalence relations on~\([n].\) The growth rate of the Bell number is described by the asymptotic expression \cite[Section~6.2]{debrujin}
\begin{equation}
   \ln B_n = n(\ln n - \ln\ln n - 1 + o(1)),\quad n\to \infty.\label{eq:Bn}
 \end{equation}
 The Bell numbers satisfy
\begin{equation}\label{eq:Bn-quotient}
  \lim_{n\to \infty} \frac{B_n}{B_{n+1}}\to 0.
\end{equation}  
We give the proof of \eqref{eq:Bn-quotient} in Appendix.


Given \(r \in\omega,\) the number of partitions \(\U\) of \([n]\) such that \(|U|\le r\) for any \(U\in \U\) is denoted~\(G_{n,r}.\) The asymptotic and combinatorial behavior of~\(G_{n,r}\) is discussed in \cite{moser_wyman}.

In this paper we will use the following estimation, which we prove in Appendix. For any constant~\(r,\,k \in \omega,\)
\begin{equation}\label{eq:Gnr-and-Bn}
  G_{n,r} 2^{k n} = o(B_n),\,n\to \infty
\end{equation}

Given~\(n\in \omega\) and~\(m \le n,\) the \emph{binomial coefficient~\(\binom{n}{m}\)} is defined as the number of distinct~\(m\)-element subsets of the set~\([n].\) The following estimation holds for any~\(n\in \omega\) and~\(m \le n\)~\cite[Section 5.4]{asymptopia}:
\begin{equation}\label{eq:binomial}
  \binom{n}{m} \le \binom{n}{\lfloor n / 2 \rfloor} \sim \frac{2^n}{\sqrt{\pi n / 2}},\quad n\to \infty
\end{equation}

\section{Connected frames}
The frame classes of different modal logics can demonstrate a very intricate combinatorial behavior that complicates the direct computation of probabilities by~\eqref{eq:as-validity}. However, it turns out that under certain conditions finding \(\Log\as(\C)\) can be reduced to a much simpler problem of finding the almost sure validities in the connected frames of \(\C.\) 


Let \(F=(X,\,R)\) be a frame. Let \({\sim} = {(R\cup R\inv)^*}\). Then \(\sim\) is an equivalence relation on \(X.\) The elements of \(X / {\sim}\) are called the \emph{connected components} of \(X.\) If  \(X\) has exactly one connected component, \(F\) is called \emph{connected}.

For a class of frames \(\C,\) we denote \(\Con\C\) the class of all connected frames in \(\C.\)

\begin{proposition}\label{prop:connected-components}
  Let \(L\) be a modal logic such that \(\F_n\cap \Con\Fr L\) is nonempty for any~\(n\in \omega.\) Then
  \(|\F_n\cap \Fr L| \ge B_n\) for any~\(n<\omega.\)
\end{proposition}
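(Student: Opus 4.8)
The plan is to construct an explicit injection from the set of all partitions of~\([n]\) into~\(\F_n\cap\Fr L\). Since the number of partitions of~\([n]\) is exactly~\(B_n\), such an injection immediately yields \(|\F_n\cap\Fr L|\ge B_n\). The whole argument rests on two facts already available: disjoint sums preserve validity (\(\Log\biguplus_{i\in I}F_i = \bigcap_{i\in I}\Log F_i\)), and isomorphic frames have the same logic and the same connectedness status.

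First I would fix, for every~\(1\le m\le n\), a frame \(G_m = ([m],\,S_m)\in\F_m\cap\Con\Fr L\); such a frame exists precisely because the hypothesis guarantees \(\F_m\cap\Con\Fr L\neq\varnothing\) for every~\(m\). Then, given a partition \(\U=\{U_1,\,\ldots,\,U_k\}\) of~\([n]\), I would place on each block~\(U_i\) a copy of~\(G_{|U_i|}\): letting \(\iota_i:\:U_i\to[|U_i|]\) be the order-preserving bijection, define \(R_i\subseteq U_i\times U_i\) by \(a\,R_i\,b\iff \iota_i(a)\,S_{|U_i|}\,\iota_i(b)\), so that \((U_i,\,R_i)\) is isomorphic to~\(G_{|U_i|}\), hence connected and a member of~\(\Fr L\). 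Finally set \(R_\U=\bigcup_i R_i\) and \(F_\U=([n],\,R_\U)\).

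The two key observations are as follows. First, \(F_\U\models L\): because the blocks are pairwise disjoint and \(R_\U\) contains no edges between distinct blocks, \(F_\U\) is isomorphic to the disjoint sum \(\biguplus_i (U_i,\,R_i)\); as each summand validates~\(L\), so does the disjoint sum, and therefore so does~\(F_\U\), giving \(F_\U\in\F_n\cap\Fr L\). Second, the assignment \(\U\mapsto F_\U\) is injective: since each \((U_i,\,R_i)\) is connected and there are no \(R_\U\)-edges in either direction between distinct blocks, the equivalence classes of \((R_\U\cup R_\U\inv)^*\) — that is, the connected components of~\(F_\U\) — are exactly the blocks \(U_1,\,\ldots,\,U_k\). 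Thus \(\U\) is recoverable from~\(F_\U\), so distinct partitions produce distinct frames.

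I do not expect any serious obstacle here; the only step that warrants care is verifying that the disjoint-union construction recovers the partition faithfully as the set of connected components, which hinges on both the absence of cross-block edges and the internal connectedness of each block. Combining the injectivity with the fact that the number of partitions of~\([n]\) equals~\(B_n\) then yields \(|\F_n\cap\Fr L|\ge B_n\), as required.
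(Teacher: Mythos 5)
Your proposal is correct and follows essentially the same argument as the paper: place a copy of a connected \(L\)-frame on each block of a partition, use the disjoint-sum property to conclude the resulting frame validates \(L\), and recover the partition as the set of connected components to get injectivity. The only difference is that you spell out the relabeling isomorphisms explicitly, which the paper leaves implicit.
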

\begin{proof}
  Let \(n \in \omega\) and let \(\U\) be a partition of~\([n].\) For any \(U\in\U\) there exists a frame \(F_U\in \F_{|U|}\cap \Con\Fr L.\) Construct a frame \(F_\U = ([n],R_\U)\) by putting a copy of \(F_U\) on \(U\) for any~\(U\in\U.\) Then \(F_\U\) is isomorphic to \(\biguplus_{U\in \U} F_U,\) and \(F_\U\models L\) since \(F_U \models L\) for any~\(U\in\U.\) Thus \(F_\U\in \F_n\cap \Fr L.\) 

  Observe that \(\U\) is exactly the set of connected components of \(F_\U.\) Thus if \(\U,\,\V\) are partitions of \(X\)  and \(F_\U = F_\V\), then \(\U = \V.\) Then \(\U \mapsto F_\U\) is an injective mapping of the partitions of \([n]\) into \(\F_n\cap \Fr L,\) so \(|\F_n\cap \Fr L| \ge B_n.\) 
\end{proof}
\begin{definition}
  Let \(r \in \omega.\) Denote by \(\F_n^{\le r}\) the set of frames in \(\F_n\) whose connected components have cardinality at most~\(r.\)
\end{definition}
\begin{proposition}\label{prop:small-components}
  For any~\(r\in \omega,\) \(\left|\F_n^{\le r}\right| = o(B_n),\,n \to \infty.\) 
\end{proposition}
\begin{proof}
  Let \(F=([n],R)\in \F_n^{\le r}.\) If \(\U\) is the set of connected components of \(F\), then \((a,b)\not\in R\) for any \(a\in U,\,b\in V\) where \(U,\,V\in \U,\,U\ne V.\) Then
  \begin{equation}\label{eq:R-partitioned}
    R \subseteq \bigcup_{U\in \U} U\times U.
  \end{equation}
  Then \(R \in \Pow{\bigcup_{U\in \U} U\times U}.\) By the assumption, \(|U|\le r\) for any \(U\in \U,\) so we can estimate 
  \begin{equation}\label{eq:count-relations}
    \left|\bigcup_{U\in \U} U\times U\right| \le \sum_{U\in \U} r^2 = |\U|r^2.
  \end{equation}

  Let \(A_\U \subseteq \F_n^{\le r}\) denote the set of all frames in \(\F_n\) whose set of connected components is~\(\U.\) By~\eqref{eq:R-partitioned} and~\eqref{eq:count-relations}, \(|A_\U| \le 2^{|\U| r^2}\). Therefore
  \[
    |\F_n^{\le r}| \le \sum_{\U} |A_\U|  \le \sum_\U 2^{r^2 |\U| } \le \sum_\U 2^{r^2 n},
  \]
  where the sum is taken over all partitions \(\U\) of \([n]\) into sets of cardinality at most~\(r.\) The number of such partitions is \(G_{n,r},\) so by \eqref{eq:Gnr-and-Bn}
  \[
    |\F_n|^{\le r} \le G_{n,r} 2^{r^2 n} = o(B_n).
  \]
\end{proof}
\begin{proposition}\label{prop:large-component-aas}
  Let \(L\) be a modal logic such that \(\F_n\cap \Con\Fr L\) is nonempty for any~\(n\in \omega.\) Then for any fixed~\(r \in \omega,\) \(\RF_n(L)\) has a connected component of cardinality greater than~\(r\) a.a.s.
\end{proposition}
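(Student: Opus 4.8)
The plan is to recognize that the complementary event is precisely membership in the set \(\F_n^{\le r}\), and then to sandwich its probability between the two counting bounds already established. Concretely, a frame \(F\in \F_n\) fails to have a connected component of cardinality greater than \(r\) if and only if all of its connected components have cardinality at most \(r\), that is, if and only if \(F\in \F_n^{\le r}.\) Hence, by the defining formula~\eqref{eq:main}, the probability I want to control is
\[
  \P\left(\RF_n(L) \in \F_n^{\le r}\right) = \frac{\left|\F_n \cap \Fr L \cap \F_n^{\le r}\right|}{\left|\F_n \cap \Fr L\right|}.
\]

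Next I would bound the numerator and the denominator separately. For the numerator, since \(\F_n \cap \Fr L \cap \F_n^{\le r} \subseteq \F_n^{\le r},\) Proposition~\ref{prop:small-components} gives \(\left|\F_n \cap \Fr L \cap \F_n^{\le r}\right| \le \left|\F_n^{\le r}\right| = o(B_n).\) For the denominator, the hypothesis that \(\F_n\cap \Con\Fr L\) is nonempty for every \(n\) is exactly what licenses the application of Proposition~\ref{prop:connected-components}, which yields \(\left|\F_n \cap \Fr L\right| \ge B_n.\) Combining these,
\[
  \P\left(\RF_n(L) \in \F_n^{\le r}\right) \le \frac{o(B_n)}{B_n} \xrightarrow{\;n\to\infty\;} 0.
\]
Therefore the complementary event---that \(\RF_n(L)\) has a connected component of cardinality greater than \(r\)---has probability tending to \(1\), which is the claim.

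There is no substantial obstacle here once the two preceding propositions are in hand: the entire content is the observation that a random \(L\)-frame is overwhelmingly likely to contain a large connected component, because the sheer number of \(L\)-frames (at least \(B_n,\) obtained in Proposition~\ref{prop:connected-components} by gluing connected pieces across an arbitrary partition) vastly outstrips the number of frames confined to small components (which is \(o(B_n)\) by Proposition~\ref{prop:small-components}). The only point requiring care is to verify that the nonemptiness hypothesis on \(\F_n\cap \Con\Fr L\) is genuinely needed, and indeed it is precisely the hypothesis of Proposition~\ref{prop:connected-components}, so it transfers directly.
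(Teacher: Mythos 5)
Your proof is correct and follows exactly the same route as the paper's: identify the complementary event with membership in \(\F_n^{\le r}\), bound the numerator by \(o(B_n)\) via Proposition~\ref{prop:small-components}, bound the denominator below by \(B_n\) via Proposition~\ref{prop:connected-components} (which is where the nonemptiness hypothesis enters), and let the ratio tend to zero. No gaps; your write-up merely spells out details the paper leaves implicit.
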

\begin{proof}
  By Proposition~\ref{prop:connected-components} and Proposition~\ref{prop:small-components},
  \[
    \P(\RF_n(L)\in \F_n^{\le r}) = \frac{\left|\F_n^{\le r}\cap \Fr L\right|}{\left|\F_n\cap \Fr L\right|} \le \frac{o(B_n)}{B_n} \to 0,\,n\to \infty.
  \]
\end{proof}
To prove the following theorem, we consider the distributions of generated subframes of the random frame that have some fixed size~\(m\in \omega.\) To simplify the reasoning, we view them as random frames in~\(\F_m,\) using the following definition. 
\begin{definition}
  Let~\(m,\, n \in \omega,\,m \le n.\) If~\(U \subseteq [n]\) and~\(|U| = m,\) the \emph{monotone relabeling} of~\(U\) is the unique bijection~\(\alpha:\:U \to [m]\) such that~\(\alpha(a) \le \alpha(b)\) iff~\(a \le b.\) Two frames~\(F = (U, R)\) and~\(G= ([m], S)\) \emph{coincide up to monotone relabeling} if the monotone relabeling \(\alpha:\:U \to [m]\) is a frame isomorphism between~\(F\) and~\(G.\) 
\end{definition}
\begin{theorem}\label{thm:logic-of-connected}
  Let \(L\) be a modal logic such that:
  \begin{enumerate}
     \item \(\F_n\cap \Con\Fr L\) is nonempty for any~\(n\in \omega.\) 
     \item For any~\(\varphi\not\in \Log\as(\Con\Fr L),\)
     \begin{equation}\label{eq:limsup}
       \limsup_{n\to \infty} \P(\RF_n(\Con\Fr L)\models \varphi) < 1.
     \end{equation}
   \end{enumerate} Then
  \(L\as \subseteq \Log\as(\Con\Fr L).\)
\end{theorem}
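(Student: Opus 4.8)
The plan is to prove the contrapositive: I would fix a formula $\varphi\notin\Log\as(\Con\Fr L)$ and show $\varphi\notin L\as$, i.e.\ that $\P(\RF_n(L)\models\varphi)$ does not tend to $1$. The role of hypothesis~(2) here is decisive. The bare failure ``$\varphi\notin\Log\as(\Con\Fr L)$'' only says $\P(\RF_m(\Con\Fr L)\models\varphi)\not\to 1$, which gives refutation bounded below merely along a (possibly sparse) subsequence of sizes $m$. Hypothesis~(2) upgrades this to the uniform statement that there are $\delta>0$ and $m_0\in\omega$ with $\P(\RF_m(\Con\Fr L)\not\models\varphi)\ge\delta$ for all $m\ge m_0$. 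This uniformity over all large component sizes is exactly what I need, because the large connected component furnished by Proposition~\ref{prop:large-component-aas} has an uncontrolled size; without~(2) the random frame might systematically avoid the sizes at which connected frames refute $\varphi$.

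The technical heart is a description of the conditional law of $\RF_n(L)$ given its partition $\U$ into connected components. Since $\Log\biguplus_{i}F_i=\bigcap_i\Log F_i$, a frame lies in $\Fr L$ iff each of its connected components does, and a connected frame lies in $\Fr L$ iff it lies in $\Con\Fr L$. Hence the frames in $\F_n\cap\Fr L$ whose component partition is a fixed $\U$ are in bijection with $\prod_{U\in\U}(\F_{|U|}\cap\Con\Fr L)$: place an arbitrary connected $L$-frame on each block and no edges across blocks. Because $\RF_n(L)$ is uniform, this bijection shows that, conditioned on $\U$, the induced frames on the blocks are independent, and the frame on a block $U$ (read off via the monotone relabeling of $U$) is a uniform random element of $\F_{|U|}\cap\Con\Fr L$, that is, a copy of $\RF_{|U|}(\Con\Fr L)$. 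Setting up this clean conditional structure, and checking that hypothesis~(1) makes all the involved distributions well defined, is the step I expect to require the most care.

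With this in hand the conclusion is short. Again by $\Log\biguplus_i F_i=\bigcap_i\Log F_i$, we have $\RF_n(L)\not\models\varphi$ iff some component refutes $\varphi$. Hence on the event $E$ that the partition $\U$ has a block $U$ of size at least $m_0$, the frame on $U$ alone refutes $\varphi$ with conditional probability $\P(\RF_{|U|}(\Con\Fr L)\not\models\varphi)\ge\delta$, so $\P(\RF_n(L)\not\models\varphi\mid \U)\ge\delta$ on $E$. Taking $r=m_0$ in Proposition~\ref{prop:large-component-aas} yields $\P(E)\to 1$, whence $\P(\RF_n(L)\not\models\varphi)\ge\delta\,\P(E)\to\delta>0$. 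Therefore $\P(\RF_n(L)\models\varphi)\not\to 1$, so $\varphi\notin L\as$, which is the desired contrapositive.
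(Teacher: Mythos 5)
Your proof is correct and takes essentially the same route as the paper's: the contrapositive, the extraction from hypothesis~(2) of a uniform bound $\delta>0$ on the refutation probability for all sufficiently large component sizes, the conditional uniformity of the frame induced on a large connected component, and an appeal to Proposition~\ref{prop:large-component-aas}. The only cosmetic difference is that you condition on the entire component partition and obtain the full product (independence) structure via the disjoint-union bijection, whereas the paper conditions only on the vertex set of the largest component and proves its conditional uniformity by a swapping bijection --- the same idea in a local form.
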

\begin{proof}
  Suppose that \(\varphi\not\in\Log\as(\Con\Fr L),\) then by \eqref{eq:limsup} 
  \[
    \limsup_{n\to \infty}\P(\RF_n(\Con\Fr L)\models \varphi) = \limsup_{n\to \infty}\frac{|\F_n \cap \Con\Fr L \cap \Fr \{\varphi\}|}{|\F_n\cap \Con\Fr L|}<  1.
  \]
  Then for some \(r \in \omega\) and \(p > 0,\)
  \begin{equation}\label{eq:prob-phi-false}
    \frac{|\F_m \cap \Con\Fr L \setminus \Fr \{\varphi\}|}{|\F_m\cap \Con\Fr L|} > p \quad \forall m \ge r.
  \end{equation}

  Define a random subset \(\RU_n \subseteq [n]\) to be the connected component of \(\RF_n(L)\) that has the maximal cardinality. If there are more than one such components \(U_1,\,\ldots,\,U_k\), let \(\RU\) be the one that contains the state \(a = \min \bigcup_{j=1}^k U_k\).

  Any realization of the generated subframe \(\RF_n(L) \gen \RU_n\) is a connected frame that validates \(L\), so \(\RF_n(L) \gen \RU_n\) coincides, up to monotone relabeling of states, with some frame from \(\F_{|\RU_n|} \cap \Con\Fr L\) almost surely.

  We claim that for any fixed \(U \subseteq [n]\), the values of \(\RF_n(L) \gen \RU_n\) with \(\RU_n = U\) are distributed uniformly on \(\F_{|U|} \cap \Con\Fr L.\) Informally, \(\RF_n(L)\gen \RU_n\) is independent of \(\RF_n(L)\gen([n]\setminus \RU_n)\).

  Let~\(U \subseteq [n]\) and denote by~\(\G_{n,U} \) the set of frames from \( \F_n\cap \Fr L\) where~\(U\) is the maximal connected component. Let us consider any \(F_1,\,F_2 \in \F_{|U|} \cap \Con\Fr L\). For any~\(G=([n],R)\in \G_{n,U}\) such that~\(G \gen U\) equals~\(F_1\) (up to monotone relabeling, which we assume hereinafter), we construct a frame~\(G'\) by changing the relation of~\(G\) on \(U\) in such a way that~\(G' \gen ([n]\setminus U) = G \gen ([n]\setminus U)\) and \(G' \gen U = F_2.\)

  Let us show that~\(G' \in \G_{n,U}.\) By the construction,~\(U\) is the maximal connected component of~\(G'.\) Moreover, \(G\models L\) implies that~\(G\gen ([n]\setminus U) \models L,\) so \(G' \models L\) since~\(G' \cong G \gen ([n]\setminus U)\,\uplus\, F_2\) and~\(F_2\models L.\) Thus~\(G' \in \F_n\cap \Fr L.\)

  Then the mapping~\(G \mapsto G'\) is a bijection between \(\{F\in \G_{n,U}:\:F\gen U = F_1\}\) and~\(\{F\in \G_{n,U}:\:F\gen U = F_2\},\) so
  \[
    \left|\{F\in\G_{n,U}:\:F\gen U = F_1\}\right| = \left|\{F\in\G_{n,U}:\:F\gen U = F_2\}\right|;
  \]
  consequently, for any~\(U \subseteq [n]\) and \(F_1,\,F_2\in \F_{|U|}\cap \Con\Fr L,\)
  \begin{align}
    \P&\left(\RF_n(L) \gen \RU_n = F_1 \,\Big|\, \RU_n = U\right) 
      = \frac{\P\left(\RF_n(L) \gen \RU_n = F_1 \text{ and } \RU_n = U\right)}{\P(\RU_n = U)}\nonumber\\
      &= \frac{\left|\{F\in\G_{n,U}:\:F\gen U = F_1\}\right|}{|\G_{n,U}|}
      = \frac{\left|\{F\in\G_{n,U}:\:F\gen U = F_2\}\right|}{|\G_{n,U}|}\nonumber\\
      &= \frac{\P\left(\RF_n(L) \gen \RU_n = F_2 \text{ and } \RU_n = U\right)}{\P(\RU_n = U)} = \P\left(\RF_n(L) \gen \RU_n = F_2 \,\Big|\, \RU_n = U\right) \label{eq:uniform-distribution}
  \end{align}
  Since \(\RF_n(L) \gen \RU_n\) is a connected frame that validates \(L\),
  \[
    \sum_{G \in \F_{|U|}\cap \Con\Fr L} \P\left(\RF_n(L) \gen \RU_n = G \,\Big|\, \RU_n = U\right) = 1.
  \]
  By~\eqref{eq:uniform-distribution} all terms in this sum are equal. Thus for any \(G\in\F_{|U|}\cap \Con\Fr L,\)  
  \begin{equation}\label{eq:prob-of-G}
    \P\left(\RF_n(L) \gen \RU_n = G \,\Big|\, \RU_n = U\right) = \frac{1}{|\F_{|U|}\cap \Con\Fr L|}.
  \end{equation}
  Since \(\RF_n(L)\gen \RU_n\) is a generated subframe of \(\RF_n(L),\) \(\RF_n(L) \not\models \varphi\) whenever \(\RF_n(L)\gen \RU_n\not\models \varphi.\) Then by the law of total probability \cite[Proposition~4.1]{probability}
  \begingroup
  \allowdisplaybreaks
  \begin{align*}
    \P(\RF_n(L) \not\models \varphi) &\ge \P(\RF_n(L)\gen \RU_n \not\models \varphi) \\
    &= \sum_{U \subseteq [n]}\P\left(\RF_n(L)\gen \RU_n \not\models \varphi\,\Big|\, \RU_n = U\right) P(\RU_n = U)\\
    &\ge \sum_{|U| > r}\P\left(\RF_n(L)\gen \RU_n \not\models \varphi\,\Big|\, \RU_n = U\right)\P(\RU_n = U)\\
    &= \sum_{|U| > r}\ \sum_{\substack{G\in \F_{|U|} \cap \Con\Fr L \\G\not\models \varphi}}\P\left(\RF_n(L)\gen \RU_n = G\,\Big|\, \RU_n = U\right)\P(\RU_n = U)\\
    &\overset{\eqref{eq:prob-of-G}}{=} \sum_{|U| > r}\ \sum_{\substack{G\in \F_{|U|} \cap \Con\Fr L \\G\not\models \varphi}}  \frac{1}{|\F_{|U|}\cap \Con\Fr L|}\P(\RU_n = U)\\
    &= \sum_{|U| > r}\P(\RU_n = U) \sum_{\substack{G\in \F_{|U|} \cap \Con\Fr L \\G\not\models \varphi}}  \frac{1}{|\F_{|U|}\cap \Con\Fr L|}\\
    &= \sum_{|U| > r}  \P(\RU_n = U)\frac{|\F_{|U|} \cap \Con\Fr L \setminus \Fr \{\varphi\}|}{|\F_{|U|}\cap \Con\Fr L|}\\
    &= \sum_{m > r}\P(|\RU_n| = m)\frac{|\F_{m} \cap \Con\Fr L \setminus \Fr \{\varphi\}|}{|\F_{m}\cap \Con\Fr L|}\\
    &\overset{\eqref{eq:prob-phi-false}}{>}\sum_{m > r}\P(|\RU_n| = m) \cdot p = p \sum_{m > r} \P(|\RU_n| = m) 
    = p \P(|\RU_n| > r).
  \end{align*}
  \endgroup
  Take the limit as \(n\to \infty.\) By Proposition~\ref{prop:large-component-aas}, \(\P(|\RU_n| > r) \to 1,\) so
  \[
    \lim_{n\to \infty}\P(\RF_n(L) \not\models \varphi) \ge p \lim_{n\to \infty}\P(|\RU_n| > r) = p.
  \]
  Then \(\varphi\not\in L\as\) since 
  \[
    \lim_{n\to \infty}\P(\RF_n(L) \models \varphi) =  1 - \lim_{n\to \infty}\P(\RF_n(L) \not\models \varphi) \le 1 - p < 1.
  \]
\end{proof}
\section{Euclidean frames}
In this section we apply Theorem~\ref{thm:logic-of-connected} to study the almost sure validities in Euclidean frames.

We begin with a simple observation about the structure of a Euclidean frame.

\begin{proposition}\label{prop:U_F}
  Let \(F = (X,\,R)\) be Euclidean. Then there exists a unique subset \(U \subseteq X\) (possibly empty) such that \(R\restr U\) is an equivalence relation and \(R \subseteq X \times U.\)
\end{proposition}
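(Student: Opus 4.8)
The plan is to produce the set $U$ explicitly rather than abstractly. The natural candidate is the range of $R$, namely $U := R\out[X] = \{b \in X \mid \exists a\in X\ (aRb)\}$; I claim that this $U$ satisfies both conditions and is the only subset that does. The inclusion $R \subseteq X\times U$ is immediate from the definition of $U$: if $aRb$, then $b$ has an $R$-predecessor, so $b\in U$. This disposes of the second condition with no use of Euclideanity at all.

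The substantive part is checking that $R\restr U$ is an equivalence relation on $U$, and this is precisely where the Euclidean property does the work. For reflexivity on $U$, take $b\in U$; then $aRb$ for some $a$, and applying Euclideanity to the pair $aRb$, $aRb$ yields $bRb$. For symmetry, suppose $b,c\in U$ with $bRc$; since $b\in U$ we already know $bRb$, and Euclideanity applied to the two successors $b$ and $c$ of the common source $b$ gives $cRb$. For transitivity, suppose $bRc$ and $cRd$ with $b,c,d\in U$; by symmetry $cRb$, and Euclideanity applied to the two successors $b$ and $d$ of the common source $c$ gives $bRd$. Each conclusion stays inside $U\times U$, so $R\restr U$ is reflexive, symmetric, and transitive on $U$.

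For uniqueness, I would let $U'$ be any subset satisfying the two conditions and show $U' = R\out[X]$. From $R\subseteq X\times U'$ we get $R\out[X]\subseteq U'$. Conversely, since $R\restr U'$ is an equivalence relation on $U'$, it is in particular reflexive on $U'$, so every $u\in U'$ satisfies $uRu$ and hence lies in the range of $R$; this gives $U'\subseteq R\out[X]$. Therefore $U' = R\out[X] = U$, which simultaneously establishes uniqueness and confirms that the existence and uniqueness clauses pin $U$ down as exactly the range of $R$.

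There is no genuine obstacle here; the only points requiring care are bookkeeping ones. The chief subtlety is that ``$R\restr U$ is an equivalence relation'' must be read relative to $U$, not to all of $X$: it is reflexivity \emph{on $U$} that makes the self-loops $bRb$ available as extra hypotheses in the symmetry and transitivity arguments, and these self-loops are in turn guaranteed only for range elements, which is what forces $U$ to be the range in the uniqueness direction. One should also check the degenerate case $R = \varnothing$: then $U = \varnothing$, the restriction $R\restr\varnothing = \varnothing$ is vacuously an equivalence relation on the empty set, and $R\subseteq X\times\varnothing$ holds trivially, so the parenthetical ``possibly empty'' clause is genuinely needed.
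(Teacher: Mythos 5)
Your proof is correct and follows essentially the same route as the paper: both take \(U\) to be the range of \(R\), use Euclideanity to get \(bRb\) for every range element, and pin down uniqueness by showing any qualifying subset must coincide with the range. The only cosmetic difference is that you spell out symmetry and transitivity explicitly, whereas the paper invokes the standard fact that a reflexive Euclidean relation is an equivalence relation.
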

\begin{proof}
  Let \(U = \bigcup_{a\in X} R\out(a).\) For any \(u\in U,\,a R u\) for some \(a\in X,\) so by the definition of Euclidean relation \(u R u.\) Then \(R \restr U\) is reflexive and Euclidean, hence an equivalence relation.

  Every state \(a\in X\) with \(R\IN(a) \ne \varnothing\) is in \(U\), so \(R \subseteq X \times U.\) 

  To prove the uniqueness, let us assume that \(V \subseteq X\) satisfies the conditions. If \(a \in V \setminus U\), then \(R\IN(a) = \varnothing,\) so \(R\) is not reflexive on \(V\setminus U.\) Since \(R\restr V\) is an equivalence relation, \(V \subseteq U.\) If \(a \in U\), then \(a R a\), so \((a,\,a) \in X \times V,\) hence \(a\in V.\) Then \(U = V\), so \(U\) is the unique subset of \(X\) that satisfies the conditions. 
\end{proof}
\begin{definition}
  For any Euclidean frame \(F,\) let \(U_F\) denote the subset of \(\dom F\) defined by the conditions from Proposition~\ref{prop:U_F}.
\end{definition}

\begin{definition}
  A frame \(F = (X,\,R)\) is a \emph{cluster} if \(R = X\times X.\)
\end{definition}
\begin{proposition}\label{prop:descr-con-K5}
  A serial Euclidean frame \(F\) is connected if and only if \(F\gen U_F\) is a cluster.
\end{proposition}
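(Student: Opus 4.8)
The plan is to reduce \(F\gen U_F\) to the frame \((U_F,\,R\restr U_F)\) and then to track, for each state, which equivalence class of \(R\restr U_F\) its successors fall into. Since \(U_F = \bigcup_{a\in\dom F} R\out(a)\) contains every successor of every state, we get \(R\out[U_F]\subseteq U_F\), whence \(R^*\out[U_F] = U_F\) and \(F\gen U_F = (U_F,\,R\restr U_F)\). By Proposition~\ref{prop:U_F} the relation \(R\restr U_F\) is an equivalence relation, so \(F\gen U_F\) is a cluster precisely when \(U_F\) is a single equivalence class of \(R\restr U_F\). Thus the proposition reduces to showing that \(F\) is connected if and only if \(R\restr U_F\) has exactly one class.

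The technical heart is a structural lemma I would prove first: for every \(a\in\dom F\) the set \(R\out(a)\) is nonempty and contained in a single \((R\restr U_F)\)-class, and if \(a\in U_F\) then \(R\out(a)\) is exactly the class \([a]\) of \(a\). Nonemptiness is seriality; containment in one class follows from the Euclidean property, since \(b,c\in R\out(a)\) gives \(aRb,\,aRc\), hence \(bRc\); and for \(a\in U_F\) the identity \(R\out(a)=[a]\) holds because a successor \(v\) of \(a\) lies in \(U_F\) with \(aRv\), so \(v\in[a]\), and conversely every \(v\in[a]\) satisfies \(aRv\). This lets me assign to each state \(a\) the unique class \(\kappa(a)\) containing \(R\out(a)\).

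For the forward implication I would show \(\kappa\) is constant along every edge of \(R\cup R\inv\), and therefore constant on each \(\sim\)-class. If \(aRa'\) then \(a'\in U_F\), so \(\kappa(a)=[a']=\kappa(a')\) by the lemma; if \(a'Ra\) then \(a\in U_F\), and the same identities give \(\kappa(a')=[a]=\kappa(a)\). Hence if \(F\) is connected, \(\kappa\) is constant on all of \(\dom F\), say equal to \(C\); then \(U_F=\bigcup_a R\out(a)\subseteq C\subseteq U_F\), so \(U_F=C\) is a single class and \(F\gen U_F\) is a cluster. For the converse, assume \(U_F\) is a single class; it is nonempty since \(F\) is serial. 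Any state \(a\) has some successor \(b\in U_F\), so \(a\sim b\), and any two elements of \(U_F\) are \(R\)-related, hence \(\sim\)-related; therefore every state is \(\sim\)-connected to a fixed \(u_0\in U_F\), and \(F\) is connected.

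The main obstacle I anticipate is the backward-step case of the \(\kappa\)-invariance: it relies essentially on seriality (so that \(\kappa\) is defined at every state) together with the exact equality \(R\out(u)=[u]\) for \(u\in U_F\), rather than mere containment. Getting this equality right—and noticing that for \(a\notin U_F\) one only has the containment \(R\out(a)\subseteq\kappa(a)\), not equality—is the delicate point; the remaining steps are routine once the structural lemma is in place.
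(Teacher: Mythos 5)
Your proof is correct, and its converse direction coincides with the paper's: seriality together with \(R \subseteq X \times U_F\) puts every state within one \(R\)-step of \(U_F\), and the cluster on \(U_F\) then ties all states into a single component. The forward direction is where you genuinely diverge. The paper disposes of it in one line: since \(R\restr U_F\) is an equivalence relation, \((R^*\cup (R^*)\inv \cup Id_X)\restr U_F = R\restr U_F\), and connectedness then forces \(R\restr U_F = U_F\times U_F\). But connectedness is defined through \({\sim} = (R\cup R\inv)^*\), whose restriction to \(U_F\) also contains alternating (zigzag) paths such as \(u\,R\inv\,a\,R\,v\), and these are not literally covered by that identity; one still has to collapse them using the Euclidean property. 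Your invariant \(\kappa\) --- the unique \((R\restr U_F)\)-class containing \(R\out(a)\), shown constant along every edge of \(R\cup R\inv\) via the exact identity \(R\out(u)=[u]\) for \(u\in U_F\) --- handles arbitrary \(\sim\)-paths uniformly, so your argument is airtight precisely at the point where the paper's wording is loose. You also check explicitly that \(F\gen U_F = (U_F,\,R\restr U_F)\), which the paper takes for granted. The trade-off: the paper's proof is shorter and conveys the right intuition (inside \(U_F\), reachability in either direction is just \(R\)), while yours costs a preliminary lemma but closes the zigzag gap and isolates exactly where seriality and the equality \(R\out(u)=[u]\) are needed.
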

\begin{proof}
  Let \(F = (X,\,R)\) be connected. Since \(R\) is an equivalence relation on \(U_F,\) \((R^* \cup (R^*)\inv \cup Id_X)\restr U_F = R \restr U_F,\) so \(R\restr U_F = U_F\times U_F,\) thus \(F \gen U_F\) is a cluster. 

  Conversely, suppose that \(R \restr U_F\) is a cluster. For any \(a\in X,\) \(R\out(a) \subseteq U_F\) and \(R\out(a) \ne \varnothing,\) so \(aRu \) for some \(u\in U.\) Then \(a R^2 v\) for any \(v\in U.\) Therefore \(R^2\out[X] = U,\) so \(R^2 \cup R^{-2} = X\times X,\) and \(F\) is connected.
\end{proof}

\begin{proposition}\label{prop:count-con-K5}
  \(\left|\F_n\cap \Con\Fr \DV\right| = 2^{\frac{n^2}{4} + n + O(\log n)}.\)
\end{proposition}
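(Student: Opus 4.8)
The plan is to first turn the count into an exact combinatorial sum using the structural description of connected serial Euclidean frames from the previous two propositions, and then to estimate that sum asymptotically by a maximum-term argument.

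For the exact formula, I would show that a connected serial Euclidean frame $F = ([n],\,R)$ is determined precisely by the data consisting of a nonempty set $U = U_F \subseteq [n]$ together with a nonempty subset $R\out(a) \subseteq U$ for each $a \in [n]\setminus U$. In one direction, Proposition~\ref{prop:descr-con-K5} gives $R\restr U_F = U_F\times U_F$ (a cluster), Proposition~\ref{prop:U_F} gives $R \subseteq X\times U_F$, and seriality forces each $R\out(a)$ to be nonempty; states of $U_F$ point exactly to $U_F$, so no further information is needed. Conversely, given such data one defines $R = (U\times U)\cup \bigcup_{a\notin U}\{a\}\times R\out(a)$; this frame is serial by construction, Euclidean because any two $R$-successors of a state both lie in the cluster $U$ and are therefore related, connected by Proposition~\ref{prop:descr-con-K5} since $F\gen U$ is a cluster, and the uniqueness clause of Proposition~\ref{prop:U_F} guarantees that the recovered set equals the chosen $U$. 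Grouping by $k = |U|$, where there are $\binom{n}{k}$ choices of $U$ and $(2^k-1)^{n-k}$ choices for the successor sets, yields
\[
  |\F_n\cap \Con\Fr\DV| = \sum_{k=1}^{n}\binom{n}{k}\,(2^k-1)^{n-k}.
\]

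For the estimate I would apply the maximum-term method. Writing $f(k) = \log_2\binom{n}{k} + (n-k)\log_2(2^k-1)$, the full sum of $n$ terms lies between its largest summand and $n$ times that summand, so $\log_2$ of the sum equals $\max_{k} f(k) + O(\log n)$. An upper bound is immediate: $\log_2\binom{n}{k}\le n$ and $\log_2(2^k-1) < k$ give $f(k) \le n + (n-k)k \le n + \tfrac{n^2}{4}$, since $(n-k)k = \tfrac{n^2}{4} - (k-\tfrac n2)^2$. For the matching lower bound I evaluate $f$ at $k_0 = \lfloor n/2\rfloor$: the central-binomial estimate~\eqref{eq:binomial} gives $\log_2\binom{n}{k_0} = n - O(\log n)$, while for $k_0 \approx n/2$ the identity $\log_2(2^{k_0}-1) = k_0 + \log_2(1 - 2^{-k_0})$ shows that replacing $\log_2(2^{k_0}-1)$ by $k_0$ costs only $(n-k_0)\cdot O(2^{-k_0}) = o(1)$, whence $(n-k_0)\log_2(2^{k_0}-1) = (n-k_0)k_0 + o(1) = \tfrac{n^2}{4} - O(1)$. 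Thus $f(k_0) = \tfrac{n^2}{4} + n - O(\log n)$, and combining the two bounds gives $\max_k f(k) = \tfrac{n^2}{4} + n + O(\log n)$, so $\log_2|\F_n\cap\Con\Fr\DV| = \tfrac{n^2}{4} + n + O(\log n)$, which is the claim.

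The main obstacle is obtaining the sharp $+n$ term together with an $O(\log n)$ error rather than a crude $O(n)$ one. This hinges on two points that need to be handled carefully: using the precise asymptotics~\eqref{eq:binomial} for $\binom{n}{\lfloor n/2\rfloor}$ to identify the entropy-order contribution as exactly $n$ up to $O(\log n)$, and verifying that near $k = n/2$ the approximation $\log_2(2^k-1)\approx k$ is accurate up to an exponentially small error, so that the maximum of $f$ really is attained at $k\approx n/2$ and the quadratic term contributes exactly $n^2/4$.
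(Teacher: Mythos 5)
Your proposal is correct and follows essentially the same route as the paper: both derive the exact sum \(\sum_{m=1}^n\binom{n}{m}(2^m-1)^{n-m}\) from the structural description of connected serial Euclidean frames, bound it above using \((n-m)m\le \tfrac{n^2}{4}\) together with \(\sum_m\binom{n}{m}\le 2^n\), and bound it below by the single term at \(m=\lfloor n/2\rfloor\) via the central binomial asymptotic~\eqref{eq:binomial}. Your maximum-term packaging and the more careful verification of the bijection are only presentational differences from the paper's direct two-sided estimate of the sum.
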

\begin{proof}
  A frame \(F = ([n],\,R)\in \F_n\cap \Con\Fr\DV\) is uniquely determined by \(U_F\) and the family of subsets \(\{R\out(a)\mid a\in [n]\setminus U_F\}\) where \(R\out(a) \subseteq U_F\) and~\(R\out(a)\ne \varnothing\) for all \(a.\) Then 
  \begin{equation}\label{eq:count-con-K5}
    \left|\F_n\cap \Con\Fr \DV\right| = \sum_{\substack{U \subseteq [n]\\U\ne\varnothing}} \left(2^{|U|}-1\right)^{n-|U|} = \sum_{m=1}^n\binom{n}{m} \left(2^m -1\right)^{n-m}.
  \end{equation}
  We find the lower and the upper asymptotic bound for this sum using~\eqref{eq:binomial}:
  \[
    \sum_{m=1}^n \binom{n}{m} (2^m - 1)^{n-m} \le  \sum_{m=1}^n \binom{n}{m} 2^{\frac{n^2}{4}} = 2^{\frac{n^2}{4}}\sum_{m=1}^n \binom{n}{m} \le 2^{\frac{n^2}{4}} \cdot 2^n = 2^{\frac{n^2}{4} + n};
  \] \begin{multline*}
    \sum_{m=1}^n \binom{n}{m} (2^m-1)^{n-m} \ge \binom{n}{\frac{n}{2}} \left(2^{\frac{n}{2}}-1\right)^{n-\frac{n}{2}} \\= \frac{2^n}{\sqrt{\pi n / 2}}(1+o(1)) \cdot 2^{\frac{n^2}{4}}(1+o(1)) = 2^{\frac{n^2}{4} + n + O(\log n)}.
  \end{multline*}
\end{proof}

\begin{proposition}\label{prop:estimate-K5-1}
  For any fixed \(r \in\omega,\) \(|U_{\RF_n(\Con\Fr\DV)}| > r\) asymptotically almost surely.
\end{proposition}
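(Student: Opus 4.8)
The plan is to read off the probability that $|U_F|\le r$ directly from the counting identity established in Proposition~\ref{prop:count-con-K5} and show it vanishes. The starting observation is that the summand $\binom{n}{m}(2^m-1)^{n-m}$ appearing in~\eqref{eq:count-con-K5} is exactly the number of frames $F\in\F_n\cap\Con\Fr\DV$ with $|U_F|=m$: by Propositions~\ref{prop:U_F} and~\ref{prop:descr-con-K5}, such a frame is determined by choosing the $m$-element set $U_F$ (in $\binom{n}{m}$ ways), forcing $R\restr U_F = U_F\times U_F$, and independently assigning to each of the remaining $n-m$ states a nonempty subset of $U_F$ as its set of successors (in $(2^m-1)^{n-m}$ ways). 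Consequently,
\[
  \P\left(|U_{\RF_n(\Con\Fr\DV)}|\le r\right)
  = \frac{\sum_{m=1}^{r}\binom{n}{m}(2^m-1)^{n-m}}{\left|\F_n\cap \Con\Fr\DV\right|}.
\]

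Next I would bound the numerator crudely. For each fixed $m$ with $1\le m\le r$, using $\binom{n}{m}\le n^m$ and $m\le r$, I have
\[
  \binom{n}{m}(2^m-1)^{n-m} \le n^m\, 2^{m(n-m)} \le n^m\, 2^{mn} \le n^r\, 2^{rn}.
\]
Summing the at most $r$ such terms yields a numerator of order $2^{rn+O(\log n)}$, a quantity whose exponent grows only linearly in $n$.

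For the denominator I would invoke Proposition~\ref{prop:count-con-K5}, which gives $\left|\F_n\cap\Con\Fr\DV\right| = 2^{n^2/4 + n + O(\log n)}$. Dividing, the ratio is at most $2^{rn - n^2/4 + O(\log n)}$, and since the exponent $rn - n^2/4\to -\infty$ as $n\to\infty$, the probability tends to $0$. Equivalently, $|U_{\RF_n(\Con\Fr\DV)}|>r$ asymptotically almost surely.

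The computation is essentially routine; the only point requiring care is the first step, namely verifying that the $m$-th summand of~\eqref{eq:count-con-K5} counts precisely the connected serial Euclidean frames with $|U_F|=m$. This is a direct consequence of the structural descriptions already proved, so there is no genuine obstacle: the dominant $2^{n^2/4}$ growth of the full count comes from clusters $U_F$ of size roughly $n/2$, and the contribution of clusters of bounded size $m\le r$ is exponentially negligible by comparison.
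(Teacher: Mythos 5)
Your proposal is correct and follows essentially the same route as the paper: count the frames with $|U_F|\le r$ via the summands of~\eqref{eq:count-con-K5}, bound that count by $2^{rn+O(\log n)}$, and divide by the $2^{n^2/4+n+O(\log n)}$ total from Proposition~\ref{prop:count-con-K5}. If anything, your write-up is slightly more careful than the paper's, which states the count of small-cluster frames with an equality $\sum_{m=1}^r\binom{n}{m}2^{m(n-m)}$ where (as you note) the exact count is $\sum_{m=1}^r\binom{n}{m}(2^m-1)^{n-m}$ and the displayed expression is only an upper bound.
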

\begin{proof}
  By \eqref{eq:count-con-K5}, the number of connected \(\DV\)-frames with \(|U_F|\le r\) is
  \[
    \sum_{U \subseteq [n],\, |U|\le r} 2^{|U|(n-|U|)} = \sum_{m=1}^r \binom{n}{m} 2^{m(n-m)} \le r \cdot O(n^r) \cdot 2^{nr} = O(2^{nr}),\,n\to \infty.
  \]
  Then by Proposition~\ref{prop:count-con-K5},
  \[
    \P(|U_{\RF_n(\Con\Fr\DV)}| \le r) = \frac{O(2^{nr})}{\left|\F_n\cap \Con\Fr \DV\right|} = \frac{O(2^{nr})}{2^{\frac{n^2}{4} + O(n)}} \to 0,\quad n\to \infty.
  \]
\end{proof}

\begin{proposition}\label{prop:estimate-K5-2}
  Let \(\RR_n\) denote the relation of  \(\RF_n(\Con\Fr\DV)\) and \(\RU_n = U_{\RF_n(\Con\Fr\DV)}.\) Let \(r \in\omega\) be fixed. Then \aas there exists a state \(a\in [n] \setminus \RU_n\) such that \(r < |\RR_n(a)| < |\RU_n|-r.\)
\end{proposition}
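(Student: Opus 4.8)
The plan is to exploit the explicit product structure of connected serial Euclidean frames underlying \eqref{eq:count-con-K5}: once the cluster \(U = U_F\) is fixed, a connected \(\DV\)-frame on \([n]\) is determined by an arbitrary choice, for each non-cluster state \(a \in [n]\setminus U\), of a nonempty out-neighborhood \(\RR_n\out(a) \subseteq U\) (while every cluster state points to all of \(U\)). I would first record the probabilistic consequence that, conditioned on the event \(\RU_n = U\) with \(|U| = m < n\), the out-neighborhoods \(\{\RR_n\out(a) : a \in [n]\setminus U\}\) are independent and each uniformly distributed over the \(2^m - 1\) nonempty subsets of \(U\). This is immediate because \(U_F\) is a deterministic function of the frame (Proposition~\ref{prop:descr-con-K5}), and the uniform measure on \(\F_n \cap \Con\Fr\DV\) restricted to frames with \(U_F = U\) is exactly the uniform measure on these \((2^m-1)^{n-m}\) product configurations.

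Call a non-cluster state \(a\) \emph{medium} if \(r < |\RR_n\out(a)| < m - r\). For a single state, conditioned on \(|U| = m \ge 2r + 2\), the probability of being extreme (not medium) is
\[
  q_m = \frac{2\sum_{k=1}^r \binom{m}{k} + 1}{2^m - 1},
\]
obtained by counting the nonempty subsets of \(U\) of size \(\le r\) or \(\ge m - r\). Medium-sized subsets exist precisely when \(m \ge 2r + 2\), so \(q_m < 1\) for all such \(m\), and \(q_m \to 0\) because the numerator is polynomial in \(m\) while the denominator is exponential; hence \(c := \sup_{m \ge 2r+2} q_m < 1\). By independence, conditioned on \(\RU_n = U\) with \(|U| = m\), the probability that \emph{no} non-cluster state is medium equals \(q_m^{\,n-m}\).

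Next I would control the distribution of \(m = |\RU_n|\) at both ends. Proposition~\ref{prop:estimate-K5-1} already gives \(\P(|\RU_n| < 2r+2) \to 0\). For the upper end I would show \(\P(|\RU_n| > \tfrac34 n) \to 0\): from \eqref{eq:count-con-K5} each term \(\binom{n}{m}(2^m-1)^{n-m}\) is at most \(2^{m(n-m)+n}\), and for \(m > \tfrac34 n\) the exponent \(m(n-m)\) stays below \(\tfrac{3}{16}n^2\), so dividing by \(|\F_n \cap \Con\Fr\DV| = 2^{n^2/4 + O(n)}\) (Proposition~\ref{prop:count-con-K5}) yields a bound \(2^{-n^2/16 + O(n)} \to 0\). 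Consequently, a.a.s.\ there are at least \(n/4\) non-cluster states.

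Finally I would assemble these via the law of total probability. Writing \(E\) for the bad event that no medium non-cluster state exists,
\[
  \P(E) \le \P(|\RU_n| < 2r+2) + \P(|\RU_n| > \tfrac34 n) + c^{\,n/4}\sum_{2r+2 \le m \le 3n/4}\P(|\RU_n| = m),
\]
where on the middle range I used \(q_m^{\,n-m} \le c^{\,n-m} \le c^{\,n/4}\), valid since there \(n - m \ge n/4\). All three terms tend to \(0\), so \(\P(E) \to 0\), which is the claim. The main obstacle is precisely this boundary analysis of \(m\): the statement genuinely fails for extreme values of \(|\RU_n|\) (too small makes medium sizes impossible, too large leaves too few non-cluster states), so the crux is the counting estimate showing that \(|\RU_n|\) concentrates in the linear regime \([2r+2,\,\tfrac34 n]\), which is exactly where the uniform bound \(c < 1\) combines with a linear number of independent trials to force a medium state a.a.s.
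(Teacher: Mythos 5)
Your proof is correct, but it takes a genuinely different route from the paper's. Both arguments rest on the same product structure behind \eqref{eq:count-con-K5}: once the cluster \(U_F=U\) is fixed, a frame in \(\F_n\cap\Con\Fr\DV\) is exactly an arbitrary choice of nonempty out-neighborhoods \(R\out(a)\subseteq U\) for the states \(a\notin U\). The paper exploits this by one global count: it defines \(\Q_{n,r}\) as the set of frames in which \emph{every} non-cluster state has extreme out-degree, bounds the number of extreme choices per state by a polynomial (about \(2m^r\)) in the cluster size \(m\), obtains
\[
  |\Q_{n,r}|\le\sum_{m=1}^{n}\binom{n}{m}\left(2m^r\right)^{n-m}=O\!\left(2^{rn\log_2 n}\right),
\]
and divides by the total \(\left|\F_n\cap\Con\Fr\DV\right|=2^{n^2/4+n+O(\log n)}\) of Proposition~\ref{prop:count-con-K5}. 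Since this comparison is against the whole class (whose cardinality is dominated by clusters of size near \(n/2\)), the paper needs no control of the distribution of \(|\RU_n|\) at all; its proof does not even invoke Proposition~\ref{prop:estimate-K5-1}. You instead recast the product structure probabilistically: conditional on \(\RU_n=U\) the out-neighborhoods are i.i.d.\ uniform, the conditional failure probability is \(q_m^{\,n-m}\) with \(q_m\le c<1\) for \(m\ge 2r+2\), and you then must confine \(|\RU_n|\) to the window \([2r+2,\,\tfrac34 n]\) --- the lower tail from Proposition~\ref{prop:estimate-K5-1}, and the upper tail \(\P(|\RU_n|>\tfrac34 n)\to 0\) via a new counting estimate that the paper never needs (and which your argument genuinely requires: at \(m=n\) there are no non-cluster states, so the uniform conditional bound breaks down without it). The trade-off: the paper's version is shorter, self-contained, and yields a superexponentially small bound on the bad event by brute-force counting; your version costs the extra tail lemma but makes the mechanism transparent --- a linear number of independent trials, each failing with probability at most \(c<1\) --- and pinpoints exactly where the statement would fail, namely at extreme cluster sizes.
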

\begin{proof}
  Let \(\Q_{n,r}\) be the subset of \(F = ([n],\,R)\in\F_n\cap \Con\Fr\DV\) consisting of all frames~\(F = ([n],R)\) such that for any~\(a\in [n] \setminus U_F,\) \({|R\out(a)| \le r}\) or \(|R\out(a)| \ge |U_F|-r\). Let us estimate \(|\Q_{n,r}|.\) If we fix \(U_F \subseteq [n]\) with \(|U_F| = m\), then for any \(a\not\in U_F\) the nonempty subset \(R\out(a) \subseteq U_F\) can be chosen in \(\sum_{k=1}^r \left(\binom{m}{k} + \binom{m}{m-k}-1\right)\) ways. Note that
  \[
    \sum_{k=1}^r \left(\binom{m}{k} + \binom{m}{m-k}-1\right) \le 2r \binom{m}{r} \le 2r \frac{m^r}{r!} \le 2m^r.
  \]
  Then by \eqref{eq:count-con-K5},
  \begin{multline*}
    |\Q_{n,r}| \le \sum_{m=1}^n \binom{n}{m} \left(2m^r\right)^{n-m} \le \sum_{m=1}^n \binom{n}{m} (2m)^{nr} \\\le (2n)^{nr}\sum_{m=1}^n \binom{n}{m} \le (2n)^{nr} 2^{n} = O\left(2^{rn\log_2 n}\right),\quad n\to \infty.
  \end{multline*}
  Finally, we show that \(\RF_n(\Con \Fr \DV)\not\in \Q_{n,r}\) \aas By Proposition~\ref{prop:count-con-K5},
  \[
    \P(U_{\RF_n(\Con\Fr\DV)}\in \Q_{n,r}) = \frac{|\Q_{n,r}|}{\left|\F_n\cap \Con\Fr \DV\right|} \le \frac{O(2^{rn\log_2 n})}{2^{\frac{n^2}{4} + O(n)}} \to 0,\,n\to \infty.
  \]
\end{proof}
  
\begin{proposition}\label{prop:log-con-K5}
  For any \(\varphi\not\in \DV,\,\RF_n(\Con\Fr \DV)\not\models \varphi\) \aas
\end{proposition}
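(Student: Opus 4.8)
The plan is to show that any $\varphi \notin \DV$ is refuted by $\RF_n(\Con\Fr\DV)$ \aas by exhibiting, inside the random frame, a point-generated subframe that $p$-morphs onto a fixed finite counter-frame for $\varphi$. Since $\DV$ has the finite model property and is the logic of all finite point-generated serial Euclidean frames, $\varphi \notin \DV$ yields such a frame $G$ with $G \not\models \varphi$, and I would first describe $G$ explicitly. By Proposition~\ref{prop:descr-con-K5}, $U_G$ is a single cluster $C$; write $m = |C|$. A point-generated serial Euclidean frame then has one of two shapes: either its root lies in $C$, so that $G = C$ is a cluster of size $m$; or the root $a_0$ lies outside $C$, so that $G = \{a_0\}\cup C$ with $R\out(a_0) = S$ for some nonempty $S \subseteq C$, say $|S| = k$ with $1 \le k \le m$.

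Next I would invoke the two structural facts about the random frame that have been prepared for exactly this purpose. Applying Proposition~\ref{prop:estimate-K5-1} with $r = m$ gives $|\RU_n| > m$ \aas, and applying Proposition~\ref{prop:estimate-K5-2} with the same $r = m$ gives, \aas, an outer state $a \in [n]\setminus \RU_n$ with $m < |\RR_n\out(a)| < |\RU_n| - m$. I would condition on the event that both hold; its probability still tends to $1$.

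The heart of the argument is to realize $G$ as a $p$-morphic image of a point-generated subframe of $\RF_n(\Con\Fr\DV)$. If $G = C$ is a pure cluster, I would take $\RF_n(\Con\Fr\DV)\gen u$ for any $u \in \RU_n$, which is the cluster on $\RU_n$; since $|\RU_n| > m$, any surjection of this cluster onto $C$ is a $p$-morphism, because a surjection between two clusters automatically satisfies the back condition $S\out(f(v)) = f(R\out(v)) = C$. If instead $G = \{a_0\}\cup C$, I would take $\RF_n(\Con\Fr\DV)\gen a$, the single outer state $a$ attached to the full cluster $\RU_n$, set $f(a) = a_0$, and choose a surjection $g\colon \RU_n \to C$ with $g(\RR_n\out(a)) = S$. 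Such a $g$ exists precisely because the two-sided bound gives $k \le m < |\RR_n\out(a)|$ and $|\RR_n\out(a)| < |\RU_n| - m \le |\RU_n| - (m-k)$: the first inequality leaves room to spread $\RR_n\out(a)$ over the $k$ blocks indexed by $S$ with none empty, and the second leaves at least $m-k$ elements of $\RU_n \setminus \RR_n\out(a)$ to fill the remaining blocks. One then verifies the $p$-morphism conditions separately on the cluster and at $a$.

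Finally, since $p$-morphisms and generated subframes preserve validity, the composite $\RF_n(\Con\Fr\DV)\gen a \pto G$ together with $G \not\models \varphi$ forces $\RF_n(\Con\Fr\DV)\gen a \not\models \varphi$, and hence $\RF_n(\Con\Fr\DV) \not\models \varphi$; as the conditioning events hold \aas, this proves the proposition. I expect the main obstacle to be the combinatorial construction of the surjection $g$ in the second case: all the delicacy lies in matching the block structure of the partition to the neighborhood $\RR_n\out(a)$ under the two-sided cardinality constraint, which is exactly the intermediate-size guarantee supplied by Proposition~\ref{prop:estimate-K5-2}.
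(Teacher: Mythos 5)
Your proposal is correct and follows essentially the same route as the paper: take a finite point-generated counterexample frame \(G\), invoke Propositions~\ref{prop:estimate-K5-1} and~\ref{prop:estimate-K5-2} to get a.a.s.\ a large cluster \(\RU_n\) and an outer state \(a\) with \(\RR_n\out(a)\) of intermediate size, and build a p-morphism from the generated subframe onto \(G\). Your explicit two-case description of \(G\) (pure cluster versus irreflexive root plus cluster) is in fact slightly more careful than the paper's uniform construction, which tacitly assumes \(S\out(c) \subsetneq U_G\) and asks for a surjection onto \(U_G \setminus S\out(c)\) even though that set can be empty when the root sees the whole cluster; your case split avoids this minor gap.
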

\begin{proof}
  Consider a formula \(\varphi\not\in \DV.\) There exists a finite point-generated \(\DV\)-frame \(G = (Y,S) = G\gen c\) such that \(G\not\models \varphi.\) Let \(r = |\dom G|.\)  Let \(F=(X,\,R)\) be a connected \(\DV\)-frame such that
  \begin{equation}\label{eq:properties-for-p-morphism}
    |U_F| > r\text{ and }\exists a\in X\setminus U_F\holds{r < |R\out(a)| < |U_F|-r}.
  \end{equation}
  We define a p-morphism \({f:F\gen a \pto G}.\) By Proposition~\ref{prop:descr-con-K5}, \(\dom F\gen a = R^*\out(a) = U_F \cup\{a\}.\) Let \(f(a) = c.\) Observe  that \(|R\out(a)| > r \ge |U_G| > |S(c)|,\) so let \(f\) map \(R\out(a)\) surjectively onto \(S(c).\) Analogously, let \(f\restr_{U_F\setminus R\out(a)}\) be a surjection onto \(U_G \setminus S(c).\)

  By Propositions~\ref{prop:estimate-K5-1} and \ref{prop:estimate-K5-2}, \(\RF_n(\Con\Fr \DV)\) has the property~\eqref{eq:properties-for-p-morphism} a.a.s., so \(\RF_n (\Con\Fr \DV)\pto G\), hence \(\RF_n(\Con\Fr \DV)\not\models \varphi\) \aas
\end{proof}
\begin{theorem}\label{thm:K5}
  \(\DV\as = \DV.\)
\end{theorem}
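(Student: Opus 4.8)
The goal is to establish $\DV\as = \DV$. Since $\DV \subseteq \DV\as$ follows from Theorem~\ref{thm:basic}, the entire content lies in the reverse inclusion $\DV\as \subseteq \DV$. The plan is to invoke Theorem~\ref{thm:logic-of-connected}, which reduces the problem to computing $\Log\as(\Con\Fr\DV)$, the logic of almost sure validities in \emph{connected} serial Euclidean frames.

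\medskip

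First I would verify the two hypotheses of Theorem~\ref{thm:logic-of-connected} for $L = \DV$. Hypothesis (1), that $\F_n\cap \Con\Fr\DV$ is nonempty for every $n$, is immediate, since the $n$-state cluster $([n],\,[n]\times[n])$ is a connected serial Euclidean frame. Hypothesis (2), the $\limsup$ condition in~\eqref{eq:limsup}, is exactly what Proposition~\ref{prop:log-con-K5} delivers: for any $\varphi\notin\DV$ we have $\RF_n(\Con\Fr\DV)\not\models\varphi$ a.a.s., which means $\P(\RF_n(\Con\Fr\DV)\models\varphi)\to 0$, so certainly the $\limsup$ is $0 < 1$. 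Thus Proposition~\ref{prop:log-con-K5} shows that the only formulas almost surely valid in the connected frames are those in $\DV$ itself, i.e.\ $\Log\as(\Con\Fr\DV) = \DV$, and it simultaneously certifies hypothesis (2).

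\medskip

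With both hypotheses in hand, Theorem~\ref{thm:logic-of-connected} yields $\DV\as \subseteq \Log\as(\Con\Fr\DV)$. Combining this with the inclusion $\Log\as(\Con\Fr\DV) = \DV$ obtained from Proposition~\ref{prop:log-con-K5} (every $\varphi\notin\DV$ fails almost surely, so $\Log\as(\Con\Fr\DV)\subseteq\DV$, while $\DV\subseteq\Log\as(\Con\Fr\DV)$ by Theorem~\ref{thm:basic} applied to the class $\Con\Fr\DV$, whose frames all validate $\DV$) gives $\DV\as\subseteq\DV$. Together with the trivial inclusion $\DV\subseteq\DV\as$, this closes the argument.

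\medskip

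The genuine mathematical work has already been discharged by the preceding propositions, so the proof of the theorem itself is short; the main obstacle was the combinatorial heart of Proposition~\ref{prop:log-con-K5}, which rests on the asymptotic counts of Propositions~\ref{prop:count-con-K5}, \ref{prop:estimate-K5-1}, and~\ref{prop:estimate-K5-2}. The only point demanding mild care in assembling the final theorem is bookkeeping the two directions of the equality $\Log\as(\Con\Fr\DV) = \DV$ and ensuring the reduction theorem is applied to the correct class; once that is in place, the statement follows by a one-line chain of inclusions.
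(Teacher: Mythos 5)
Your proposal is correct and follows exactly the paper's own argument: the easy inclusion via Theorem~\ref{thm:basic}, verification of the two hypotheses of Theorem~\ref{thm:logic-of-connected} (the cluster \(([n],\,[n]\times[n])\) for nonemptiness, Proposition~\ref{prop:log-con-K5} for the \(\limsup\) condition), and the chain \(\DV\as \subseteq \Log\as(\Con\Fr\DV) \subseteq \DV\). Your explicit bookkeeping of why \(\Log\as(\Con\Fr\DV) = \DV\) (including the appeal to Theorem~\ref{thm:basic} for the class \(\Con\Fr\DV\)) is slightly more detailed than the paper's one-line remark, but it is the same proof.
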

\begin{proof}
  By Theorem~\ref{thm:basic}, \(\DV \subseteq \DV\as\). For the other direction, observe that the cluster~\(([n],\,[n]\times[n])\) is a connected \(\DV\)-frame for all~\(n\in\omega\), thus~\(\F_n\cap \Con\Fr \DV\ne \varnothing\) for all~\(n\in\omega.\) By Proposition~\ref{prop:log-con-K5},~\(\limsup\limits_{n\to \infty}\P(\RF_n(\Con\Fr \DV)\models \varphi) = 0 < 1.\) Then by Theorem~\ref{thm:logic-of-connected}, \(\DV\as \subseteq \Log\as(\Con\Fr\DV) \subseteq \DV,\) where the latter inclusion also follows from Proposition~\ref{prop:log-con-K5}.
\end{proof}
Now we use our method to find~\(\DIVV\as,\, \KVB\as,\, \SV\as.\) We follow mostly the same strategy, so we omit some technical steps.
\begin{theorem}
  \(\DIVV\as = \DIVV.\)
\end{theorem}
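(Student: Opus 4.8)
The plan is to mirror the proof of Theorem~\ref{thm:K5} for \(\DV\), using Theorem~\ref{thm:logic-of-connected} to reduce the problem to the connected frames. The inclusion \(\DIVV \subseteq \DIVV\as\) is immediate from Theorem~\ref{thm:basic}, so the whole content lies in the reverse inclusion. The full cluster \(([n],\,[n]\times[n])\) is a connected serial transitive Euclidean frame for every \(n\), so \(\F_n\cap\Con\Fr\DIVV\neq\varnothing\) and condition~(1) of Theorem~\ref{thm:logic-of-connected} holds. It therefore suffices to prove that \(\RF_n(\Con\Fr\DIVV)\not\models\varphi\) a.a.s.\ for every \(\varphi\notin\DIVV\); this simultaneously yields \(\Log\as(\Con\Fr\DIVV)\subseteq\DIVV\) and verifies condition~(2) (the relevant \(\limsup\) is \(0\)), whence \(\DIVV\as\subseteq\Log\as(\Con\Fr\DIVV)\subseteq\DIVV\) and we are done.

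The key simplification, compared with the \(\DV\) case, is that transitivity rigidifies the connected frames. If \(F=(X,\,R)\) is serial, transitive, and Euclidean, Proposition~\ref{prop:U_F} provides the cluster region \(U_F\) with \(R\subseteq X\times U_F\) and \(R\restr U_F\) an equivalence relation; connectedness forces \(R\restr U_F = U_F\times U_F\) (a single cluster, exactly as in Proposition~\ref{prop:descr-con-K5}), and then transitivity forces \(R\out(a)=U_F\) for \emph{every} \(a\in X\), since any \(a\) reaches some \(u\in U_F\) and \(u\) reaches all of \(U_F\). Hence a connected \(\DIVV\)-frame on \([n]\) is exactly \(([n],\,[n]\times U)\) for a nonempty \(U\subseteq[n]\), and so \(\left|\F_n\cap\Con\Fr\DIVV\right| = 2^n-1\). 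Writing \(U\) for the cluster region \(U_{\RF_n(\Con\Fr\DIVV)}\), we get \(\P(|U|=m) = \binom{n}{m}/(2^n-1)\), so by the symmetry \(\binom{n}{m}=\binom{n}{n-m}\) and the tail bound \(\sum_{m\le r}\binom{n}{m}=O(n^r)=o(2^n)\), for any fixed \(r\) we have \(r<|U|<n-r\) a.a.s.

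Finally, given \(\varphi\notin\DIVV\), choose a finite point-generated \(\DIVV\)-frame \(G=G\gen c\) with \(G\not\models\varphi\) and set \(r=|\dom G|\). The point-generated \(\DIVV\)-frames are precisely the finite clusters and the ``balloons'' consisting of one irreflexive root seeing a cluster (Euclideanness forces the root to see a single cluster), so \(G\) has one of these two shapes and at most \(r\) states. On the event \(r<|U|<n-r\), which holds a.a.s., pick \(a\in[n]\setminus U\); then \(\RF_n(\Con\Fr\DIVV)\gen a\) is a balloon whose irreflexive root \(a\) sees the cluster \(U\) of size \(>r\). Mapping \(a\) to the root of \(G\) (in the balloon case) or into its cluster (in the cluster case) and surjecting \(U\) onto the cluster of \(G\) yields a p-morphism \(\RF_n(\Con\Fr\DIVV)\gen a\pto G\); since generated subframes and p-morphisms both preserve validity, \(\RF_n(\Con\Fr\DIVV)\not\models\varphi\) a.a.s. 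The one thing to get right is this explicit p-morphism: one checks \(S\out(f(x))=f(R\out(x))\) at the root and at a cluster point for each of the two target shapes, which is routine precisely because every state of the random connected frame sees all of \(U\). This is the point where transitivity makes the argument strictly easier than for \(\DV\): there is no need for the delicate ``room'' estimate of Proposition~\ref{prop:estimate-K5-2}, because \(R\out(a)=U_F\) is forced rather than being an arbitrary nonempty subset of \(U_F\).
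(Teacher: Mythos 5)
Your proof is correct and follows essentially the same route as the paper's: both reduce to connected frames via Theorem~\ref{thm:logic-of-connected}, characterize the connected \(\DIVV\)-frames as \(([n],\,[n]\times U)\) with \(\varnothing\ne U\subseteq[n]\) (hence \(2^n-1\) of them), show the cluster has size between \(r\) and \(n\) a.a.s., and then obtain a p-morphism from a generated subframe onto the falsifying point-generated frame. The only difference is that you spell out details the paper explicitly omits (the shape of point-generated \(\DIVV\)-frames and the verification of the p-morphism), which is a faithful filling-in rather than a different argument.
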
 
\begin{proof}
  Let~\(\varphi\not\in \DIVV,\) then \(G\not\models \varphi\) for some finite point-generated frame \(G\models \DIVV\).
  Let us show that~\(\RF_n(\Con\Fr \DIVV)\pto G\) a.a.s. Notice that a Euclidean frame \(F=(X,R)\) is serial, transitive and connected iff~\(U_F\ne \varnothing\) and~\(R = X \times U_F\). Then such frame is uniquely determined by its cluster~\(U_F\ne \varnothing\). Therefore 
  \[
    |\F_n\cap \Con\Fr\DIVV| = \left|\{U \subseteq [n]:\:U \ne \varnothing\}\right| = 2^n - 1.
  \]
  For any fixed~\(r \in \omega\) we have \(r < |U_{\RF_n(\Con\Fr \DIVV)}| < n\) a.a.s. Indeed, for any~\(m\le n\) there are~\(\binom{n}{m}\) choices for a cluster of size~\(m\) in~\(n,\) so
  \begin{align*}
    &\P(|U_{\RF_n(\Con\Fr \DIVV)}| \le r)
    = \frac{\sum_{m = 1}^r \binom{n}{m}}{2^n - 1}
    = \frac{O(n^r)}{2^n - 1} \to 0,&n\to \infty,\\
    &\P(|U_{\RF_n(\Con\Fr \DIVV)}| = n) = \frac{1}{2^n - 1} \to 0,&n\to \infty.
  \end{align*}
  There a.a.s. exists a p-morphism from some generated subframe of~\(\RF_n(\Con\Fr\DIVV)\) to~\(G,\) so \(\P(\RF_n(\Con\Fr \DIVV)\models \varphi)\to 0\) as~\(n\to \infty.\)

  Since~\(\varphi\not\in \DIVV\) was arbitrary,~\(\Log\as(\Con\Fr\DIVV) \subseteq \DIVV.\)

  By Theorem~\ref{thm:logic-of-connected} we have \(\DIVV\as \subseteq \Log\as(\Con\Fr\DIVV) \subseteq \DIVV.\) The converse inclusion follows from Theorem~\ref{thm:basic}.
\end{proof}
\begin{theorem}
  \(\KVB\as = \KVB.\)
\end{theorem}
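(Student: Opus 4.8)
The plan is to prove both inclusions, with $\KVB \subseteq \KVB\as$ immediate from Theorem~\ref{thm:basic}, and to obtain the converse by a \emph{direct} analysis of the full random frame $\RF_n(\KVB)$ rather than through Theorem~\ref{thm:logic-of-connected}. The reason is structural: a symmetric Euclidean relation is automatically transitive (from $aRb$ and $bRc$, symmetry gives $bRa$ and then the Euclidean property gives $aRc$), so a symmetric Euclidean frame is a partial equivalence relation. By Proposition~\ref{prop:U_F} the reflexive points $U_F$ carry an equivalence relation whose classes are full clusters, while every point outside $U_F$ is isolated and irreflexive. Hence the connected components are clusters together with one-point irreflexive frames, and $\F_n \cap \Con\Fr\KVB$ consists (for $n \ge 2$) of the single full cluster. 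The connected reduction would therefore only yield $\KVB\as \subseteq \SV$, and the surplus formulas of $\SV\setminus\KVB$, such as $\b p \to p$, must be refuted using the isolated irreflexive points that $\RF_n(\KVB)$ possesses but its largest component does not.

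Concretely, I would first record that the finite point-generated symmetric Euclidean frames are exactly the finite clusters together with the one-point irreflexive frame $\bullet = (\{c\},\varnothing)$, so by the finite model property $\KVB = \Log(\{\text{finite clusters}\}\cup\{\bullet\})$. Thus for $\varphi\notin\KVB$ there is a falsifying finite point-generated frame $G$, which is either a cluster of some size $m$ or the frame $\bullet$; it suffices to show that in either case a generated subframe of $\RF_n(\KVB)$ admits a p-morphism onto $G$ \aas, whence $\RF_n(\KVB)\not\models\varphi$ \aas

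The two structural facts to establish are: (1) \aas $\RF_n(\KVB)$ contains an isolated irreflexive point, whose generated subframe is $\bullet$; and (2) for every fixed $r$, \aas $\RF_n(\KVB)$ contains a cluster of cardinality greater than $r$, whose generated subframe is a cluster that p-morphs onto any smaller cluster. For the counting I would use the bijection between symmetric Euclidean frames on $[n]$ and pairs $(U,\pi)$ with $U\subseteq[n]$ and $\pi$ a partition of $U$, which gives $|\F_n\cap\Fr\KVB| = \sum_{m=0}^n \binom{n}{m}B_m = B_{n+1}$. The frames with no isolated irreflexive point are exactly those with $U=[n]$, of which there are $B_n$, so the probability of having none is $B_n/B_{n+1}\to 0$ by~\eqref{eq:Bn-quotient}, proving~(1). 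For~(2), the frames all of whose clusters have size at most $r$ number $\sum_{m=0}^n\binom{n}{m}G_{m,r} \le 2^n G_{n,r} = o(B_n)$ by~\eqref{eq:Gnr-and-Bn} (using $G_{m,r}\le G_{n,r}$, via padding partitions with singletons), so their probability is $o(B_n)/B_{n+1}\to 0$, proving~(2). Combining (1) and (2) with the case analysis on $G$ yields $\KVB\as\subseteq\KVB$.

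The main obstacle I anticipate is precisely the recognition that Theorem~\ref{thm:logic-of-connected} is insufficient here: its upper bound $\Log\as(\Con\Fr\KVB)=\SV$ is strictly too weak, since the largest component of a typical frame is a cluster and never witnesses the dead point. Once one accounts for the disconnected part of the typical frame, the two probabilistic estimates are routine applications of~\eqref{eq:Bn-quotient} and~\eqref{eq:Gnr-and-Bn}, closely paralleling the estimates already used for $\DV$.
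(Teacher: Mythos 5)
Your proof is correct, and it takes a genuinely different---and in fact sounder---route than the paper's own write-up. The paper performs the same case analysis (the finite point-generated \(\KVB\)-frames are the finite clusters and the irreflexive singleton), the same count \(|\F_n\cap\Fr\KVB|=B_{n+1}\), and the same estimate \(B_n/B_{n+1}\to 0\) via \eqref{eq:Bn-quotient} for the dead point; for the large cluster it cites Proposition~\ref{prop:large-component-aas} instead of your padding bound \(2^n G_{n,r}=o(B_n)\) from \eqref{eq:Gnr-and-Bn} (both work). But the paper then packages the conclusion as an application of Theorem~\ref{thm:logic-of-connected}, asserting \(\KVB\as\subseteq\Log\as(\Con\Fr\KVB)\subseteq\KVB\), and along the way claims that \(\RF_n(\Con\Fr\KVB)\) a.a.s.\ contains an irreflexive singleton. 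As you correctly diagnose, this packaging cannot be taken literally: for \(n\ge 2\) the only connected \(\KVB\)-frame on \([n]\) is the full cluster, so \(\Log\as(\Con\Fr\KVB)=\SV\), which contains \(\b p\to p\notin\KVB\), and the connected random frame never contains an irreflexive point. The estimates the paper actually carries out concern the full random frame \(\RF_n(\KVB)\), so your decision to argue directly about \(\RF_n(\KVB)\)---a.a.s.\ it contains both a cluster of size \(>r\) and an isolated irreflexive point, each a generated subframe that p-morphs onto the relevant refuting point-generated frame---is exactly the rigorous rendering of the intended argument, and all your supporting steps (symmetric Euclidean implies transitive, the component structure, \(G_{m,r}\le G_{n,r}\) by padding with singletons) check out. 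What the paper's framing buys is brevity by reusing the general machinery; what yours buys is a correct final step, since the connected-frame reduction of Theorem~\ref{thm:logic-of-connected} is intrinsically blind to the isolated irreflexive points that separate \(\KVB\) from \(\SV\).
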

\begin{proof}
  Notice that~\(\KVB\) is the logic of its finite point-generated frames, which are exactly the finite clusters and the irreflexive singletons. If~\(\varphi\not\in \KVB\) for some formula~\(\varphi,\) then either~\(([r],\,[r]\times[r]) \not\models \varphi\) for some~\(r\in \omega\), or~\((\{a\}, \varnothing)\not\models \varphi.\)

  Let~\(r\in \omega\) and~\(([r],\,[r]\times[r]) \not\models \varphi\).
  The connected components of a~\(\KVB\)-frame are clusters and irreflexive singletons.
  Then \(\RF_n(\KVB)\) a.a.s. contains a cluster of size greater than~\(r\) by Proposition~\ref{prop:large-component-aas}, so there is a p-morphism from a generated subframe of~\(\RF_n(\Con\Fr\KVB))\) to \(([r],[r]\times[r])\), so~\(\P(\RF_n(\Con\Fr\KVB)\models \varphi)\to 0\) as~\(n\to \infty.\)

  Next we consider a formula~\(\varphi\) such that \((\{a\},\varnothing\not\models \varphi)\).
  A \(\KVB\)-frame with set of states~\(n\) is uniquely determined by a subset~\(E\subseteq [n]\) that consists of the irreflexive singletons and an equivalence relation on \(n\setminus E.\) Then
  \[
    \left|\F_n\cap \KVB\right| = \sum_{E\subseteq [n]} B_{n - |E|} = \sum_{m=0}^n\binom{n}{m} B_{n-m} = \sum_{m=0}^n \binom{n}{m} B_n = B_{n+1}.
  \]

  A \(\KVB\)-frame has no irreflexive singletons iff its relation is an equivalence relation, so there are exactly~\(B_n\) frames without irreflexive singletons in~\({\F_n\cap \Fr\KVB}.\) Then by \eqref{eq:Bn-quotient}
  \[
    \P(\RF_n(\KVB)\text{ has no irreflexive singletons}) = \frac{B_n}{\left|\F_n\cap \KVB\right|} = \frac{B_n}{B_{n+1}}\to 0
  \]
  as \(n\to \infty.\) Then~\(\RF_n(\Con\Fr\KVB)\) contains a generated subframe isomorphic to~\((\{a\},\varnothing)\) a.a.s., so~\(\P(\RF_n(\Con\Fr\KVB)\models \varphi)\to 0\) as~\(n\to \infty.\)

  By Theorem~2.6,~\(\KVB\as \subseteq \Log\as(\Con\Fr\KVB) \subseteq \KVB.\) The converse inclusion is true by Theorem~\ref{thm:basic}.
\end{proof}
\begin{theorem}
  \(\SV\as = \SV.\)
\end{theorem}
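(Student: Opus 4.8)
\section*{Proof proposal}

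The plan is to reuse the template of the previous theorems together with Theorem~\ref{thm:logic-of-connected}, exploiting the fact that the connected \(\SV\)-frames are especially simple. Recall that \(\Fr\SV\) consists of the frames whose relation is reflexive and Euclidean, i.e.\ an equivalence relation. The connected components of such a frame are precisely its equivalence classes, so a connected \(\SV\)-frame is a single equivalence class, that is, a cluster. Hence for each \(n\) there is exactly one frame in \(\F_n\cap\Con\Fr\SV\), namely the cluster \(([n],[n]\times[n])\); in particular \(\F_n\cap\Con\Fr\SV\ne\varnothing\) for all \(n\), which is hypothesis~(1) of Theorem~\ref{thm:logic-of-connected}. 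Consequently \(\RF_n(\Con\Fr\SV)\) is deterministic: every realization equals the cluster on \([n]\).

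First I would record that \(\SV\) is the logic of its finite point-generated frames, which are exactly the finite clusters; thus \(\SV=\bigcap_{n\ge 1}\Log([n],[n]\times[n])\). The key structural observation is that the clusters are linearly preordered by p-morphisms: for \(m\ge k\ge 1\) any surjection \([m]\to[k]\) is a p-morphism \(([m],[m]\times[m])\pto([k],[k]\times[k])\), since \(R\out(a)\) is the whole domain in either cluster. By the p-morphism property this gives \(\Log([m],[m]\times[m])\subseteq\Log([k],[k]\times[k])\), so the logics of the clusters form a decreasing chain as the size grows.

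Next I would compute \(\Log\as(\Con\Fr\SV)\). Since the random connected frame is deterministic, a formula \(\varphi\) lies in \(\Log\as(\Con\Fr\SV)\) iff \(([n],[n]\times[n])\models\varphi\) for all sufficiently large \(n\). By the decreasing-chain observation, validity propagates downward in cluster size, so if \(\varphi\) is valid in every large cluster then it is valid in every cluster; hence \(\Log\as(\Con\Fr\SV)=\bigcap_{n\ge1}\Log([n],[n]\times[n])=\SV\). The same monotonicity verifies hypothesis~(2) of Theorem~\ref{thm:logic-of-connected}: if \(\varphi\notin\Log\as(\Con\Fr\SV)\) then \(\varphi\) fails in some cluster and therefore in every larger one, so \(\P(\RF_n(\Con\Fr\SV)\models\varphi)=0\) for all large \(n\) and \(\limsup_{n\to\infty}\P(\RF_n(\Con\Fr\SV)\models\varphi)=0<1\).

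Finally, with both hypotheses of Theorem~\ref{thm:logic-of-connected} in hand, it yields \(\SV\as\subseteq\Log\as(\Con\Fr\SV)=\SV\), and the reverse inclusion \(\SV\subseteq\SV\as\) is Theorem~\ref{thm:basic}. The conceptual heart of the argument — and the only point requiring care — is the p-morphism chain among clusters and the resulting monotonicity of validity in cluster size; once that is in place, both the identification of \(\Log\as(\Con\Fr\SV)\) and the verification of the limsup condition are immediate. Unlike the earlier Euclidean cases there are no genuine asymptotic estimates to carry out, because \(|\F_n\cap\Con\Fr\SV|=1\) collapses the probabilistic content.
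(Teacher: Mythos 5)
Your proposal is correct and follows essentially the same route as the paper's proof: identify \(\F_n\cap\Con\Fr\SV\) as the single cluster \(([n],[n]\times[n])\), use the fact that \(\SV\) is the logic of finite clusters and that large clusters map p-morphically onto smaller ones, and then combine Theorem~\ref{thm:logic-of-connected} with Theorem~\ref{thm:basic}. You spell out the verification of hypothesis~(2) and the identity \(\Log\as(\Con\Fr\SV)=\SV\) more explicitly than the paper does, but the underlying argument is identical.
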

\begin{proof}
  Observe that~\(\F_n\cap \Con\Fr\SV\) consists of one frame~\((n,\,n\times n).\) The finite point-generated frames of~\(\SV\) are the finite clusters, which are p-morphic images of~\((n,\,n\times n)\) for~\(n\) sufficiently large. Then \(\P(\RF_n(\Con\Fr\SV)\models \varphi) \to 0\) for any \(\varphi\not\in\SV.\) Then the statement of this theorem follows from Theorem~\ref{thm:basic} and Theorem~\ref{thm:logic-of-connected}.
\end{proof}
\section{Transitive frames}
In this section we discuss the logics of almost sure validities in the finite frames of~\(\GLIII\) and~\(\GrzIII.\)

\begin{definition}
  A frame~\(F = (X,\,R)\) is an \emph{inverse tree} if there is a unique element~\(a_0\) such that~\(R\out(a_0) = \varnothing\), and for any~\(a\in X\setminus \{a_0\},\) \(|R\out(a)| = 1 \) and~\(a R^* a_0.\) We denote \(T_n\) the set of all inverse trees over the set of states~\([n].\)
\end{definition}
\begin{definition}
  The \emph{distance} between states~\(a\) and~\(b\) of a frame~\(F = (X,\,R)\) is the number
  \(d_R(a,\,b) = \min\{n \in \omega:\:a R^n b\}\) for any \( a,\,b\in [n].\)
\end{definition}
\begin{definition}
  The \emph{height of a finite inverse tree} is the maximum of the distance between the states in that tree.
\end{definition}

\begin{definition}
  Given a transitive Noetherian relation~\(R\) on a set~\(X,\) the \emph{transitive reduction} of~\(R\) is defined as~\(R^- = R \setminus R^2.\) 
\end{definition}
It is straightforward to see that for a transitive Noetherian relation~\(R,\) its transitive reduction \(R^-\) is the smallest relation on~\(X\) such that~\({R \subseteq (R^-)^*}\) \cite{transitive_reduction}. If~\((X,\,R)\in \Con\Fr\GLIII\), then~\(R = (R^-)^+\). Similarly, if~\((X,\,R)\) is a frame in \(\Con\Fr\GrzIII\), then~\(R = (R^-)^*.\) Therefore the mapping~\(R \mapsto R^-\) has an inverse on \(\Con\Fr\GLIII\) and on~\(\Con\Fr\GrzIII.\)
\begin{proposition}\label{prop:tree-bijection}
  For any~\(n\in \omega,\) the sets of frames \(T_n,\) \(\F_n\cap \Con\Fr\GLIII,\) and  \(\F_n\cap \Con\Fr\GrzIII,\) are bijective.
  Moreover, the bijection preserves the distance between any pair of states.
\end{proposition}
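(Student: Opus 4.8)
The plan is to realize all three sets as different presentations of the same combinatorial object—the inverse tree—and to pass between them using the transitive reduction (already shown to be invertible on $\Con\Fr\GLIII$ and on $\Con\Fr\GrzIII$) together with the operation of adding or deleting the diagonal. Concretely, I would define $\Phi:\F_n\cap\Con\Fr\GLIII\to T_n$ by $\Phi(R)=R^-$, with candidate inverse $E\mapsto E^+$, and independently a map $\Delta:\F_n\cap\Con\Fr\GLIII\to\F_n\cap\Con\Fr\GrzIII$ given by $\Delta(R)=R\cup Id_{[n]}$, with candidate inverse $S\mapsto S\setminus Id_{[n]}$. Composing these, and matching the composite against the direct map $E\mapsto E^*$, ties all three sets together.

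The heart of the argument is showing that $\Phi$ really lands in $T_n$, i.e.\ that the transitive reduction of a connected $\GLIII$-frame $(X,R)$ is an inverse tree. First I would record that such an $R$ is a strict partial order: transitivity and irreflexivity make it a strict order, and the Noetherian condition (with $a_i\ne a_{i+1}$) rules out any cycle, so $R$ is acyclic and, being finite, has maximal elements (those $a$ with $R\out(a)=\varnothing$). Next, using non-branching I would show that every non-maximal $a$ has exactly one immediate $R^-$-successor: the set $R\out(a)$ is an $R$-chain by non-branching, so it has a least element $b$; one checks $aR^-b$, while every other element of $R\out(a)$ lies in $R^2\out(a)$ and hence is not an $R^-$-successor. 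Finally I would use connectivity to force a unique maximal element: since $R^-\subseteq R\subseteq (R^-)^*$, the relations $R$ and $R^-$ generate the same equivalence relation, so connectivity of $(X,R)$ makes $R^-$ connected as an undirected graph; a finite acyclic graph in which every non-sink has out-degree one and which is connected is exactly an inverse tree, whose unique sink $a_0$ satisfies $R\out(a_0)=\varnothing$ and $aR^{-*}a_0$ for all $a$. The stated identity $R=(R^-)^+$ makes $\Phi$ injective, and for surjectivity I would check that for $E\in T_n$ the closure $E^+$ is transitive, irreflexive (no cycle in a tree), Noetherian (finite and acyclic), non-branching (the $E^+$-successors of a node are its strict ancestors along the unique path to the root, hence a chain), and connected, with $(E^+)^-=E$.

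For the $\GLIII\leftrightarrow\GrzIII$ correspondence I would verify that $\Delta$ is well defined and bijective. The only non-routine point is that deleting the diagonal from a connected $\GrzIII$-frame $S$ leaves a transitive relation: here I would use that the Noetherian condition forbids a proper two-cycle $aSb,\,bSa$ with $a\ne b$ (it would generate an infinite chain $a,b,a,b,\dots$), so $S$ is antisymmetric and $S\setminus Id_{[n]}$ is transitive. Reflexivity/irreflexivity, non-branching, the Noetherian property, and connectivity (self-loops may be deleted from any witnessing path) all transfer, and that $\Delta$ and $S\mapsto S\setminus Id_{[n]}$ are mutually inverse is immediate from irreflexivity of $R$ and reflexivity of $S$. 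Composing with $\Phi$ exhibits the bijections among all three sets.

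For the final clause I would observe that the three bijections fix the state set $[n]$ and factor through one and the same inverse tree $E=R^-$: the corresponding frames are $E^+\in\Con\Fr\GLIII$ and $E^*=E^+\cup Id_{[n]}\in\Con\Fr\GrzIII$. Hence the distance between a pair of states, read off from the common reduction $E$, is the same in all three presentations; for the frame-class bijection this is even literal, since $E^+$ and $E^*$ coincide off the diagonal, so $d(a,b)$ agrees for every pair. I expect the genuine obstacle to be the structural analysis of the second paragraph—establishing that non-branching forces out-degree one in $R^-$ and that connectivity forces a single root—while the diagonal bijection and the distance statement are bookkeeping once the inverse-tree structure of $R^-$ is in hand.
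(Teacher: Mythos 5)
Your proposal is correct and takes essentially the same route as the paper: the paper's entire proof of this proposition is ``Follows directly from the discussion above,'' where that discussion introduces exactly your maps ($R\mapsto R^-$, with inverses $E\mapsto E^+$ on $\Con\Fr\GLIII$ and $E\mapsto E^*$ on $\Con\Fr\GrzIII$, plus adding/removing the diagonal), so your write-up just supplies the structural verifications the paper leaves implicit (non-branching forces out-degree one in $R^-$, connectivity forces a unique root, Noetherianness gives antisymmetry in the $\GrzIII$ case). Your reading of the distance clause is also the right one: with the paper's definition of $d_R$, a transitive frame has all distances at most $1$, so preservation is literal only between $E^+$ and $E^*$, as you note, and otherwise must be read through the common reduction $E$ (equivalently, the depth of $a$ in the tree equals $\left|R\out(a)\right|$ in the corresponding frame), which is precisely how the proposition is used in the $\GLIII$ theorem.
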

\begin{proof}
  Follows directly from the discussion above.
\end{proof}

\begin{proposition}\label{prop:tree-height}
  Consider the random inverse tree with uniform distribution on \(T_n\) and let~\(\Rh_n\) be its height.

  There exists a constant~\(q>0\) such that for any fixed~\({r \in \omega},\) 
  \[
    \lim_{n\to \infty}\P(\Rh_n > r) \ge q.
  \]
\end{proposition}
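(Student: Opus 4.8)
The plan is to transfer the problem to uniformly random rooted labelled trees and then bound the height from below by the (undirected) distance between two fixed states, which is easy to control by a direct tree count.

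First I would use Proposition~\ref{prop:tree-bijection} to view the random inverse tree on \([n]\) as a uniformly random rooted labelled tree, of which there are \(|T_n| = n^{n-1}\); here \(R\) is the parent relation (every non-root state has a unique \(R\)-successor) and \(\Rh_n\) is the maximal depth of a state, i.e.\ its \(R\)-distance to the root. Since each unrooted labelled tree arises from exactly \(n\) inverse trees (one per choice of root, orienting edges toward it), the underlying \emph{unrooted} tree of the random inverse tree is uniformly distributed over the \(n^{n-2}\) labelled trees on \([n]\). Fix two distinct states, say \(0\) and \(1\), and let \(\delta(0,1)\) be their distance in the underlying unrooted tree. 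Writing \(\ell\) for the least common ancestor of \(0\) and \(1\), we have \(\delta(0,1) = (\mathrm{depth}(0)-\mathrm{depth}(\ell)) + (\mathrm{depth}(1)-\mathrm{depth}(\ell)) \le \mathrm{depth}(0)+\mathrm{depth}(1) \le 2\Rh_n\), so \(\{\delta(0,1) > 2r\}\subseteq\{\Rh_n > r\}\). It therefore suffices to show that \(\P(\delta(0,1)\le 2r)\to 0\) for every fixed \(r\).

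Next I would compute, for fixed \(k\), the number \(N_{n,k}\) of labelled trees on \([n]\) in which \(\delta(0,1)=k\). Such a tree contains a unique path \(0 = w_0, w_1,\dots,w_k = 1\); the internal vertices \(w_1,\dots,w_{k-1}\) form an ordered selection of \(k-1\) elements of \([n]\setminus\{0,1\}\), giving at most \(n^{k-1}\) choices, and the tree is then determined by how the remaining \(n-k-1\) vertices hang off this path. By the standard matrix-tree count, the number of spanning trees of the complete graph on \([n]\) containing a fixed spanning forest with components of sizes \(s_1,\dots,s_c\) is \(n^{c-2}\prod_i s_i\); applied to the forest consisting of the \(k\)-edge path (size \(k+1\)) together with \(n-k-1\) isolated vertices this gives \((k+1)n^{n-k-2}\) completions per path. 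Since the \(0\)-\(1\) path is recovered from the tree there is no overcounting, so \(N_{n,k}\le n^{k-1}(k+1)n^{n-k-2}=(k+1)n^{n-3}\), and dividing by the total \(n^{n-2}\) yields \(\P(\delta(0,1)=k)\le (k+1)/n\).

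Summing, \(\P(\delta(0,1)\le 2r)\le \sum_{k=1}^{2r}(k+1)/n = O(r^2/n)\to 0\) for fixed \(r\), hence \(\P(\Rh_n > r)\ge \P(\delta(0,1)>2r)\to 1\); in particular the limit is \(\ge q\) for any constant \(q\in(0,1)\), e.g.\ \(q=\tfrac12\). The one point needing care — the main obstacle — is the exact tree count \(N_{n,k}\): one must invoke (or reprove) the generalized Cayley / matrix-tree formula for the number of spanning trees containing a prescribed forest, and check that the \(0\)-\(1\) path being intrinsic to the tree prevents double counting. Everything else is a routine asymptotic estimate, and the argument in fact establishes \(\lim_{n\to\infty}\P(\Rh_n > r)=1\), which is stronger than the stated bound.
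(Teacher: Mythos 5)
Your proof is correct, but it takes a genuinely different route from the paper. The paper's own proof is probabilistic and relies on an external result: it cites the asymptotic formulas \(\E(\Rh_n) \sim \sqrt{2\pi n}\) and \(\var(\Rh_n) \sim \pi(\pi-3)n/3\) for the height of a uniformly random labelled rooted tree, then applies Chebyshev's inequality to get \(\limsup_{n\to\infty}\P(\Rh_n \le r) \le (\pi-3)/6\), yielding the explicit constant \(q = 1-(\pi-3)/6 < 1\). You instead argue combinatorially from scratch: you pass from the uniform inverse tree to the uniform unrooted labelled tree (the projection is exactly \(n\)-to-one, so uniformity is preserved), lower-bound the height by half the tree distance \(\delta(0,1)\) between two fixed states via the least-common-ancestor decomposition, and bound \(\P(\delta(0,1)=k) \le (k+1)/n\) using the generalized Cayley formula for the number of spanning trees containing a prescribed forest (your applications of that formula check out: \(c = n-k\) components, component-size product \(k+1\), and the uniqueness of the \(0\)--\(1\) path rules out overcounting). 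Your approach buys two things: it is self-contained modulo a classical and elementary counting identity, rather than depending on the R\'enyi--Szekeres-type moment asymptotics, and it proves the stronger statement \(\lim_{n\to\infty}\P(\Rh_n > r) = 1\), i.e.\ \(q\) can be taken arbitrarily close to \(1\); this would in turn sharpen the downstream theorem, giving \(\P(\RF_n(\Con\Fr\GLIII)\models\varphi)\to 0\) for every \(\varphi\notin\GLIII\) instead of merely \(\limsup \le 1-q\). What it gives up is the quantitative information about the typical height (order \(\sqrt{n}\)) that the paper's moment-based argument carries implicitly, though that information is not needed for the proposition. One cosmetic remark: you invoke Proposition~\ref{prop:tree-bijection} to identify inverse trees with rooted labelled trees, but that proposition concerns the correspondence with \(\GLIII\)- and \(\GrzIII\)-frames; the identification you actually use follows directly from the definition of \(T_n\) and Cayley's formula, so the citation is unnecessary (and slightly misplaced), not wrong.
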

\begin{proof}
  The asymptotic expressions for the expected value and the variance of~\(h_n\) are given in \cite{height}:
  \[
    \E (\Rh_n) \sim \sqrt{2\pi n};\quad \var (\Rh_n) \sim \frac{\pi(\pi-3)n}{3},\quad n\to \infty.
  \]
  By Chebyshev's inequality \cite[Theorem~1.4]{probability},
  \begin{multline*} 
     \P(\Rh_n\le r) \le \P\left(|\Rh_n-\E \Rh_n| \ge \E \Rh_n - r\right) \le \frac{\var (\Rh_n)}{(\E \Rh_n - r)^2}\\\sim \frac{\pi(\pi-3)n}{3(\sqrt{2\pi n}-r)^2}
     \sim \frac{\pi(\pi-3)}{3(\sqrt{2\pi}-\frac{r}{\sqrt{n}})^2}\sim \frac{\pi-3}{6}<1,\quad n\to \infty.
  \end{multline*}
  Let \(q = 1 - \frac{\pi-3}{6} > 0,\) then
  \(\lim_{n\to \infty}\P(\Rh_n> r) = 1-\lim_{n\to \infty}\P(\Rh_n\le r) \ge q.\)
\end{proof}
\begin{theorem}
  \(\GLIII\as = \GLIII.\)
\end{theorem}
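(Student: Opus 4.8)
The plan is to invoke Theorem~\ref{thm:logic-of-connected}, following the template of Theorem~\ref{thm:K5}. Since \(\GLIII \subseteq \GLIII\as\) already holds by Theorem~\ref{thm:basic}, it suffices to prove \(\GLIII\as \subseteq \GLIII\), and for this I would verify the two hypotheses of Theorem~\ref{thm:logic-of-connected} while simultaneously establishing \(\Log\as(\Con\Fr\GLIII) \subseteq \GLIII\). Hypothesis~(1) is immediate: for every \(n\) the set \(T_n\) of inverse trees on \([n]\) is nonempty (it contains a single path), so \(\F_n \cap \Con\Fr\GLIII\) is nonempty by the bijection of Proposition~\ref{prop:tree-bijection}. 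The entire weight of the argument then rests on proving that for every \(\varphi \notin \GLIII\) we have \(\limsup_{n\to\infty}\P(\RF_n(\Con\Fr\GLIII)\models\varphi) < 1\). This single estimate yields \(\Log\as(\Con\Fr\GLIII)\subseteq\GLIII\) directly, and, because \(\GLIII\subseteq\Log\as(\Con\Fr\GLIII)\) by Theorem~\ref{thm:basic}, it also supplies hypothesis~(2).

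First I would pin down the countermodels. As \(\GLIII\) has the finite model property and is the logic of its finite point-generated frames, any \(\varphi \notin \GLIII\) is refuted in some finite point-generated transitive irreflexive non-branching Noetherian frame \(G\); such a frame is necessarily a finite strict linear order, i.e.\ a chain, say on \(r = |\dom G|\) states. On the tree side, Proposition~\ref{prop:tree-bijection} identifies a connected \(\GLIII\)-frame with the transitive closure of the edge relation of an inverse tree, and for any state \(a\) the generated subframe \(F\gen a = R^*\out(a)\) is precisely the path from \(a\) to the root: a chain of \(d_R(a,a_0)+1\) states. The crucial elementary observation is that any two finite chains of equal length are isomorphic, so a generated subframe of the right length already \emph{is} a copy of \(G\) up to isomorphism, and hence refutes \(\varphi\).

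The argument then runs as follows. Whenever the inverse tree underlying \(\RF_n(\Con\Fr\GLIII)\) has height \(\Rh_n > r\), a deepest branch contains a state at distance exactly \(r-1\) from the root, whose generated subframe is a chain of \(r\) states, isomorphic to \(G\); since generated subframes preserve validity, \(\RF_n(\Con\Fr\GLIII)\not\models\varphi\). By Proposition~\ref{prop:tree-bijection} the height of this tree is distributed as the \(\Rh_n\) of Proposition~\ref{prop:tree-height}, which furnishes a constant \(q>0\) with \(\lim_{n\to\infty}\P(\Rh_n > r) \ge q\). Therefore \(\limsup_{n\to\infty}\P(\RF_n(\Con\Fr\GLIII)\models\varphi) \le 1 - q < 1\), and the reduction closes with \(\GLIII\as \subseteq \Log\as(\Con\Fr\GLIII) \subseteq \GLIII\).

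The main obstacle is conceptual rather than computational. Unlike the Euclidean cases, one cannot show \(\RF_n(\Con\Fr\GLIII)\not\models\varphi\) a.a.s.: the height of a random inverse tree is of order \(\sqrt{2\pi n}\) with standard deviation of the same order, so it does not concentrate, and Proposition~\ref{prop:tree-height} yields only the positive lower bound \(q\), not convergence to \(1\). This is exactly why Theorem~\ref{thm:logic-of-connected} was stated with a \(\limsup < 1\) hypothesis rather than an a.a.s.\ hypothesis, and the proof above is designed to use precisely that weaker input. A secondary point worth care is that, because \(\GLIII\)-frames are irreflexive, one cannot refute \(\varphi\) by \emph{collapsing} a long chain onto \(G\) via a p-morphism (any identification of two related points would force a reflexive loop in the image); the correct tool is therefore the validity-preserving \emph{generated subframe}, which extracts a shorter chain rather than quotients a longer one.
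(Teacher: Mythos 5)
Your proof is correct and takes essentially the same route as the paper's: refute \(\varphi\notin\GLIII\) in a finite irreflexive chain of size \(r\), use the inverse-tree bijection (Proposition~\ref{prop:tree-bijection}) and the height bound of Proposition~\ref{prop:tree-height} to find such a chain as a generated subframe of \(\RF_n(\Con\Fr\GLIII)\) with asymptotic probability at least \(q\), and close via Theorem~\ref{thm:logic-of-connected} using exactly its \(\limsup<1\) hypothesis. The only differences are cosmetic: you locate the chain via a state at distance \(r-1\) from the root rather than via a state with \(|\RR_n(a)|\ge r\), and you record the bound correctly as \(1-q\) where the paper (harmlessly, since both are \(<1\)) writes \(\le q\).
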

\begin{proof}
  Let us recall that the logic~\(\GLIII\) is complete w.r.t. the finite irreflexive chains. Let~\({\varphi\not\in \GLIII}\). Then~\(\varphi\) is falsified in some irreflexive chain~\(F\) of a finite cardinality~\(r.\)

  Let~\(\RR_n\) denote the relation of~\(\RF_n(\Con\Fr\GLIII).\) By Proposition~\ref{prop:tree-bijection} and Proposition~\ref{prop:tree-height},
  \[
    \lim_{n\to \infty}\P\left(\exists a\in [n]:\:|\RR_n(a)| \ge r\right) = \lim_{n\to \infty}\P(\Rh_n > r) \ge q.
  \]
  With an asymptotic probability~\(q > 0\) there exists~\(a\in [n]\) such that~\({|\RR_n(a)|\ge r}.\) In this case \(\RF_n(\GLIII)\gen a\) is an irreflexive chain of cardinality at least~\(r\), so~\(F\) is isomorphic to some generated subframe of~\(\RF_n(\GLIII)\gen a.\)

  Then~\(\limsup_{n\to \infty}\P({\RF_n(\Con\Fr\GLIII)\models \varphi}) \le q < 1.\)

  Since~\(\varphi\not\in \GLIII\) was arbitrary,~\(\Log\as(\Con\Fr\GLIII) = \GLIII.\)

  Since \(([n],<)\) is a connected~\(\GLIII\)-frame,~\(\F_n\cap \Con\Fr \GLIII\ne \varnothing\) for all~\(n\in\omega\), thus by Theorem~\ref{thm:logic-of-connected}~\(\GLIII\as \subseteq \GLIII\). The converse is true by Theorem~\ref{thm:basic}.
\end{proof}
\begin{theorem}
  \(\GrzIII\as = \GrzIII.\) 
\end{theorem}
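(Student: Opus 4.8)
The plan is to follow the proof of the preceding theorem for $\GLIII$ almost verbatim, replacing irreflexive chains by reflexive ones. Recall that $\GrzIII$ is complete with respect to the finite reflexive chains, so it suffices to establish $\Log\as(\Con\Fr\GrzIII) = \GrzIII$ and then invoke Theorem~\ref{thm:logic-of-connected}. For the nontrivial inclusion $\Log\as(\Con\Fr\GrzIII) \subseteq \GrzIII$, take any $\varphi \notin \GrzIII$; then $\varphi$ is falsified in some finite reflexive chain $F$ of cardinality $r$.

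First I would transfer the combinatorial input from the inverse-tree model. By Proposition~\ref{prop:tree-bijection} the set $\F_n \cap \Con\Fr\GrzIII$ is in distance-preserving bijection with $T_n$, and under this bijection a frame $(X,R) \in \Con\Fr\GrzIII$ satisfies $R = (R^-)^*$. Let $\RR_n$ denote the relation of $\RF_n(\Con\Fr\GrzIII)$. By Proposition~\ref{prop:tree-height} there is a constant $q > 0$ with
\[
  \lim_{n\to\infty}\P(\Rh_n > r) \geq q,
\]
and since the bijection preserves distances, the event $\Rh_n > r$ corresponds to the existence of a state $a \in [n]$ whose generated subframe $\RF_n(\Con\Fr\GrzIII)\gen a$ follows the unique successor path from $a$ to the root. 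Because $R = (R^-)^*$, this generated subframe is a reflexive chain of cardinality greater than $r$.

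Next I would observe that such a reflexive chain has a generated subframe isomorphic to $F$ (take the final segment of the appropriate length), so with asymptotic probability at least $q$ we have $\RF_n(\Con\Fr\GrzIII)\gen a \not\models \varphi$, whence $\RF_n(\Con\Fr\GrzIII)\not\models\varphi$. Consequently
\[
  \limsup_{n\to\infty}\P(\RF_n(\Con\Fr\GrzIII)\models\varphi) \leq 1 - q < 1,
\]
which shows $\varphi \notin \Log\as(\Con\Fr\GrzIII)$. As $\varphi \notin \GrzIII$ was arbitrary, $\Log\as(\Con\Fr\GrzIII) = \GrzIII$, and the displayed bound simultaneously verifies the limsup hypothesis of Theorem~\ref{thm:logic-of-connected}. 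Since $([n], \leq)$ is a connected $\GrzIII$-frame for every $n$, the nonemptiness hypothesis holds as well, so $\GrzIII\as \subseteq \Log\as(\Con\Fr\GrzIII) = \GrzIII$; the reverse inclusion $\GrzIII \subseteq \GrzIII\as$ is Theorem~\ref{thm:basic}.

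The main obstacle, exactly as in the $\GLIII$ case, is that the deep chain appears only with probability bounded below by $q$ rather than with probability tending to $1$: a uniformly random inverse tree need not be tall. This is precisely why the limsup formulation of Theorem~\ref{thm:logic-of-connected}, rather than an outright \aas falsification, is the correct tool, and why the uniform lower bound $q$ of Proposition~\ref{prop:tree-height} (independent of $r$) is the crucial quantitative ingredient. The only $\GrzIII$-specific verification is that the generated subframe along the successor path is a \emph{reflexive} chain, which is immediate from $R = (R^-)^*$.
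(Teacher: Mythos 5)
Your proof is correct and is essentially the paper's own: the paper proves this theorem simply by declaring it ``analogous to the previous theorem,'' and your argument is exactly that analogy spelled out --- reflexive chains in place of irreflexive ones, the same use of Propositions~\ref{prop:tree-bijection} and~\ref{prop:tree-height}, the same limsup bound feeding into Theorem~\ref{thm:logic-of-connected}, with \(([n],\leq)\) witnessing nonemptiness. Your only deviation is a cosmetic improvement: you correctly write the bound as \(\limsup_n \P(\RF_n(\Con\Fr\GrzIII)\models\varphi)\le 1-q\), whereas the paper's \(\GLIII\) proof has a typo (\(\le q\)).
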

\begin{proof}
  Analogous to the previous theorem.
\end{proof}

\section{Results and discussion}
We developed several general results about the almost sure validities in random Kripke frames. Theorem \ref{thm:basic} states that the almost sure validities in the random frame~\(\RF(\C)\) is a normal modal logic that extends~\(\C\) for any class of frames~\(\C\). Theorem~\ref{thm:logic-of-connected}, which applies for many commonly studied modal logics, establishes an important inclusion~\(L\as \subseteq \Log\as(\Con\Fr L)\).

We established the axiomatizations for the considered classes of frames:
\begin{align*}
  &\DV\as = \DV;&
  &\DIVV\as = \DIVV;&
  &\KVB\as = \KVB;\\
  &\SV\as = \SV;&
  &\GrzIII\as = \GrzIII;&
  &\GLIII\as = \GLIII.
\end{align*}
Interestingly, all these logics share some desirable properties, such as finite axiomatization, finite model property, decidability, etc. This stands in contrast to the known results on~\(\K\as\) \cite{goranko} and~\(\textbf{GL}\as\) \cite{verbrugge} that imply that these logics lack the finite axiomatizability.

Our results on~\(\DV,\,\DIVV,\,\KVB,\,\SV,\,\GrzIII,\,\GLIII\) provide examples of logics that are equal to their `almost sure' counterparts. Finding a criterion that characterizes the logics with this property is an interesting direction for future research.

The computational method we use in this paper seems to be able to yield more general results, such as a classification of logics of almost sure validities of the frame classes of all logics above~\(\KV.\) We also conjecture that many of such logics obey the zero-one law.
\bibliographystyle{amsalpha}
\bibliography{references}
\section{Appendix}
\subsection{Proof of \eqref{eq:Bn-quotient}}
The Bell numbers satisfy Dobi\'{n}ski's formula \cite{dobinski}
\begin{equation}\label{eq:dobinski}
  B_n = e\inv\sum_{k=0}^\infty \frac{k^n}{k!},\quad k\in \omega.
\end{equation}
Let \(\RX\) be a random variable of with the standard Poisson distribution:
\[
  \P(\RX = k) = \frac{e\inv}{k!}.
\]
Then by \eqref{eq:dobinski} the~\(n^{th}\) moment of~\(\RX\) is~\(B_n:\)
\[
  \E(\RX^n) = \sum_{k=0}^\infty k^n \frac{e\inv}{k!} = B_n
\]
Since the function~\(\varphi:\:[0,\infty) \to [0,\infty),\,\varphi(x) = x^{\frac{n+1}{n}}\) is convex, Jensen's inequality \cite[Theorem~5.1]{probability} holds: \(\varphi(\E\RX^n)\le \E(\varphi(\RX^n))\), so
\[
  {B_n}^{\frac{n+1}{n}} \le \E\left(\RX^{\frac{n+1}{n}}\right) = \sum_{k=0}^\infty \left(k^n\right)^{\frac{n+1}{n}} \frac{e\inv}{k!} = e\inv \sum_{k=0}^n \frac{k^{n+1}}{k!} = B_{n+1},
\]
therefore
\[
  \frac{B_n}{B_{n+1}} \le \frac{B_n}{B_n^{\frac{n+1}{n}}} = B_n^{-\frac{1}{n}}.
\]
Recall that by \eqref{eq:Bn}~\(B_n = e^{n\ln n(1+o(1))}\), so~\(B_n^{-\frac{1}{n}} = e^{-\ln n(1+o(1))} \to 0\) as~\(n\to \infty.\) We conclude that
\[
  \lim_{n\to \infty}\frac{B_n}{B_{n+1}} = 0.
\]
\subsection{Proof of \eqref{eq:Gnr-and-Bn}}
An asymptotic expression of~\(G_{n,r}\) for fixed \(r\in \omega\) and \(n\to \infty\) is provided in~\cite{moser_wyman}:
\begin{equation}\label{eq:Gnr}
  G_{n,r}\sim\left(\frac{n}{R_{n,r}}\right)^n r^{-\frac{1}{2}}\exp\left(\frac{n}{R_{n,r}}+\frac{R_{n,r}{}^r}{r!}-n-1\right),
\end{equation}
where~\(R_{n,\,r}\) is the positive root of the equation
\begin{equation}\label{eq:R}
   R_{n,r}+\frac{R_{n,r}{}^2}{1!}+\frac{R_{n,r}{}^3}{2!}+\ldots+\frac{R_{n,r}{}^r}{(r-1)!}=n.
\end{equation}
Note that~\(R_{n,r} \to \infty\) as~\(n\to \infty.\) Then \({R_{n,r}}^k = o(R_{n,r}^r)\) for any~\(k < r,\) so the equation \eqref{eq:R} yields
\[
  \frac{R_{n,r}{}^r}{(r-1)!}(1+o(1)) = n,
\]
therefore
\[
  R_{n,r} = \left(n(r-1)!\right)^\frac{1}{r}(1+o(1).
\]

Then we may estimate~\(\ln R_{n,r} = \frac{1}{r} \ln n (1+o(1));\;\frac{n}{R_{n,r}}+\frac{R_{n,r}{}^r}{r!}-n-1 = O(n),\) so \eqref{eq:Gnr} implies
\[
  \ln G_{n,r} = n\ln n - \frac{1}{r}n\ln n + O(n).
\]
Therefore for any~\(k \in \omega\), by the asymptotic expression \eqref{eq:Bn} for~\(B_n\) we get:
\begin{multline*}
  \ln \left(\frac{G_{n,r} 2^{kn}}{B_n}\right) = n\ln n - \frac{1}{r}n\ln n + O(n) + kn\ln 2 - n\ln n(1+o(1)) \\= -\frac{1}{r}n\ln n(1+o(1)) \to -\infty;
\end{multline*}
then by exponentiating we get the desired estimation:
\[
  \frac{G_{n,r} 2^{kn}}{B_n} \to 0,\quad n\to \infty.
\]
\end{document}